\newtheorem{theorem}{\noindent Theorem}[section]
\newtheorem{lemma}[theorem]{\noindent Lemma}
\newtheorem{corollary}[theorem]{\noindent Corollary}
\newtheorem{proposition}[theorem]{\noindent Proposition}
\newtheorem{definition}[theorem]{\noindent Definition}
\newtheorem{assumption}{Assumption}[section]
\theoremstyle{definition}{
	\newtheorem{remark}[theorem]{\noindent Remark}
}
\numberwithin{equation}{section}
\newcommand{\R}{\mathbb{R}}
\newcommand{\E}{\mathscr{E}}
\newcommand{\G}{\mathscr{H}}
\newcommand{\Y}{\mathscr{Y}}
\newcommand{\W}{\mathcal{W}}
\newcommand{\spt}{\operatorname{spt}}
\begin{document}
	\title{Finite time blow-up and global solutions for the viscoelastic wave equation with combined power-type nonlinearities}
	\author{Haiyang Lin\footnote{Email address: linhaiyang2002@163.com}, Jinqi Yan\footnote{Email address: yjq20011027@163.com}, 
		Bo You\footnote{Corresponding author; Email address: youb2013@xjtu.edu.cn}\\
		{\small School of Mathematics and Statistics, Xi'an Jiaotong University, Xi'an, 710049, P. R. China}\\
Marcelo M. Cavalcanti\footnote{Email address: mmcavalcanti@uem.br}\\
	{\small Department of Mathematics, State University of
Maring\'a, Maring\'a, 87020-900,  P.  R, Brazil}		
		}
	\maketitle
\begin{center}
\begin{abstract}
The main objective of this paper is to investigate the global existence and nonexistence of solutions to the viscoelastic wave equation with a linear memory term of Boltzmann type, a nonlinear friction damping and a supercritical source term which is a combined power-type nonlinearities. The global existence of solutions is obtained provided that the energy sink dominates the energy source in an appropriate sense. In more general scenarios, we prove the global existence of solutions if the initial data is taken from a subset of a suitable potential well. Based on the global existence results, the energy decay rate is described in terms of the relaxation kernel as well as the growth order of the damping term. In addition, we also establish the blow-up results in the case that the source is stronger than the dissipative effect. In particular, we prove the finite time blow-up solutions exist at arbitrarily high initial energy.
			
\textbf{Keywords}: Potential well; Energy decay; Blow-up; Supercritical sources; Nonlinear wave equation; Viscoelasticity.
			
\textbf{Mathematics Subject Classification (2020)}: 35B40; 35B44; 35L15; 35L71.
\end{abstract}
\end{center}
\section{Introduction}
In this paper, we will consider the following viscoelastic wave equation 
\begin{equation}\label{equation}
\begin{cases}
u_{tt}-k(0)\Delta u-\int_{0}^{+\infty}k^{\prime}(s)\Delta u(t-s)\,ds+|u_t|^{m-1}u_t=f(u),\quad \text{in } \Omega\times (0,T),\\
u(x,t)=0,\quad \text{on }\Gamma\times (-\infty,T),\\
u(x,t)=u_0(x,t),\ u_t(x,t)=\partial_tu_0(x,t)\quad \text{in }\Omega\times (-\infty,0],
\end{cases}
\end{equation}
where $u(x,t)$ is a real valued unknown function on $\Omega\times (0,T),$ $\Omega\subset \R^3$ is a bounded domain with boundary $\Gamma$ of class $C^2.$ Notice that the results established in this paper remain valid for the bounded domains in $\mathbb{R}^n$ with $n\geq 3.$ The linear memory term $\int_{0}^{+\infty}k^{\prime}(s) \Delta u(t-s)\,ds$ of Boltzmann type quantifies the viscous resistance and provides a weak dissipation, the relaxation kernel satisfies that $k^{\prime}(s)<0$ for all $s>0$ and $k(+\infty)=1$. The term $|u_t|^{m-1}u_t$ with $m\ge 1$ represents a nonlinear damping which strongly dissipates energy and drives the system to stability. The nonlinear source term $f(u)$ is of the following form:
	\begin{align}\label{f(u)}
		f(u)=A(u)+B(u):=\sum_{i=1}^{r}a_i|u|^{p_i-1}u-\sum_{j=1}^{l}b_j|u|^{q_j-1}u
	\end{align}
with $a_i,$ $b_j>0,$ $1\le p_i,$ $q_j\le 5$ for $r,$ $l\geq 1,$ $A(u)$ is an energy source term that amplifies energy, $B(u)$ is an energy sink term that dissipates energy.

The global behavior of solutions to wave equations with nonlinear damping and source terms has been extensively studied. For example, the authors  \cite{Georgiev1994existence} have considered a three dimensional nonlinear wave equation with damping and sources
\begin{align}\label{eq:withoutmemory}
	u_{tt}-\Delta u+a|u_t|^{m-1}u_t=b|u|^{p-1}u\quad (a,b>0)
\end{align}
under a Dirichlet boundary condition with $1< p\le 3.$ They obtained the global existence of solution to problem \eqref{eq:withoutmemory} in the case that $1 < p \le m,$ and a blow-up result provided that the initial energy is sufficiently negative and $1<m<p$.  In \cite{Esquivel-Avila2003acharacterization}, the author extended some results to the nonlinear Kirchhoff equation 
\begin{align*}
	u_{tt}-M(\|\nabla u\|_2^2)\Delta u+a|u_t|^{m-1}u_t=b|u|^{p-1}u
\end{align*}
with homogeneous Dirichlet boundary condition. In a series of papers \cite{Barbu2005onnonlinear, Barbu2007blowup, Barbu2005existence}, the authors investigated the wave equations with degenerate damping and source terms
\begin{align*}
	u_{tt}-\Delta u+|u|^k|u_t|^{m-1}u_t=|u|^{p-1}u
\end{align*}
under a Dirichlet boundary condition, where the degenerate damping $|u|^k|u_t|^{m-1}u_t$ models friction modulated by strain. Moreover, there are some works (see  \cite{Vitillaro2002global, Cavalcanti2007wellposedness, Kou2024uniform}) about the wave equations with nonlinear boundary damping and source terms. Furthermore, the authors \cite{Cavalcanti2023decay} have also considered the stabilization and well-posedness for the semilinear wave equations with localized nonlinear dissipation. In addition, there are some results (see  \cite{Keith2006systems, Alves2009onexistence, Rammaha2010critically, Rammaha2010global, Guo2014systems}) about systems of wave equations.

In many practical applications, there are many materials exhibiting both viscous and elastic characteristics when they undergo deformation. Moreover, they also display time-dependent, rate-sensitive and history-dependent mechanical responses. To accurately describe wave propagation in media with hereditary characteristics, such as skin tissue, it is imperative to account for viscoelastic effects. As a precedent, Boltzmann \cite{boltzmann1909wissenschaftliche} characterized the behavior of viscoelastic materials through constitutive relations that involve long but fading memory. Subsequent foundational developments were established in \cite{Bernard1961viscoelasticity}, while comprehensive mathematical surveys can be found in \cite{Fabrizio1987mathematical, Renardy1987mathematical}. As we know, there are some results about the stability and asymptotic behavior of wave equations with hereditary memory. For example, the stability and asymptotic behavior of wave equations with hereditary memory was further analyzed in \cite{Dafermos1970anabstract, Dafermos1970asymptotic}. Subsequent advances in this line of research include a unified approach for deriving decay rates in integro-differential equations \cite{Alabau-Boussouira2008decay}, a general framework for optimal decay under broad kernel assumptions \cite{Alabau-Boussouira2009general}, and studies of abstract viscoelastic wave equations \cite{lasiecka2013note}. Moreover, the introduction of frictional damping and a nonlinear source term into the evolution equations leads to some new challenges in the mathematical investigation of such models. Based on the framework of viscoelasticity, there are some works  about the global existence and nonexistence for the wave equations with source term in the case that the growth order $p$ of the source term satisfies the subcritical $1\leq p<3$ and critical $p=3$ cases (see \cite{Cavalcanti2003frictional, Cavalcanti2016attractors, Song2015blow, Messaoudi2003blow, Messaoudi2006blow}), the more challenging supercritical case $3< p\le 6$ (see \cite{Guo2014hadamard,Guo2017blowup,Guo2018energy}). In the existing literatures, it is worthy mentioning that the Hadamard well-posedness for problem \eqref{equation} was obtained in \cite{Guo2014hadamard}, while  the global behavior of solutions for problem \eqref{equation} with $f(u)=|u|^{p-1}u$ was analyzed in \cite{Guo2017blowup,Guo2018energy}. Based on these works in \cite{Guo2014hadamard,Guo2017blowup}, the authors \cite{Sun2019blow} have also proved the blow-up results with arbitrary initial energy. The global behavior for wave equations with localized memory, frictional damping and a $p$-homogeneous source term $f(u)$ was considered in \cite{Cavalcanti2023aymptotic}. However, the existing literatures mentioned above are mainly focused on the models with homogeneous source terms, there is relatively few results (see \cite{Tao2011thenonlinear, Kutev2014global, Dinh2019onfractional, Miao2013thedynamics, Dimova2024global}) about the evolution equations with the combined power-type nonlinearities. To the authors' knowledge, there is no results about the global existence and nonexistence of problem \eqref{equation} with the combined power-type nonlinearities, which motivates the present work.

The main objective of this paper is to study the finite time blow-up and global existence of solutions for problem \eqref{equation} with combined nonlinearities. In what follows, let us comment on the main difficulties and novelties of this paper:
\begin{enumerate}[\rm{(}1\rm{)}]
\item The presence of the memory term motivates us to define the potential well in the weighted Hilbert space $L^2_{\mu}(\mathbb{R}^+;H^1_0(\Omega))$, which is specifically suited for the full past history problem \eqref{equation}, rather than in the standard space $H^1_0(\Omega)$. Additionally, the blow-up analysis for problem \eqref{equation} becomes more subtle, particularly in imposing a constraint on the upper bound of the initial energy.
\item In our model, the source term is non-homogeneous, which is distinct from other relevant models. Thus, the following estimate
\begin{align*}
c\|u\|_{p+1}^{p+1}\le\int_{\Omega}f(u)u\,dx\le C\|u\|_{p+1}^{p+1}
\end{align*}
for some positive constants $c,$ $C$ can not be obtained for some $1\le p\le5$, which would pose some technical difficulties in the proof of blow-up analysis. Furthermore, the construction of the potential well is complicated by the energy sink $B(u)$, whose dissipative effect can only be partially utilized. Moreover, the depth $d$ of the potential well defined in \eqref{d0} is different from the reference value $d_0$ defined in \eqref{d00} when $r>1$. As a consequence we have to impose some tighter constraints on the initial energy to establish the energy decay result.
\item The frictional damping, memory term and energy sink dissipate energy, while the energy source amplifies energy. It would be quite interesting to determine whether the system is stable and explore the mechanism of how these terms interplay to lead to blow-up or decay of the energy. The main features of such model consist of a supercritical source term, a memory term accounting for the full past history and a frictional damping term with an arbitrary growth rate $m$. The only exception to the arbitrariness of $m$ is reserved for the blow-up analysis, where we impose the constraint $m\le p_r\le 5$.
\item A comprehensive analysis of the blow-up of solutions to problem \eqref{equation} is presented in this paper. We establish the blow-up of solutions with negative initial energy and nonnegative energy under certain constraints. Furthermore, we prove that for arbitrary initial energy there exists a solution that blows up in finite time.
\end{enumerate}
The rest of this paper is organized as follows. In Section \ref{Pre}, we present the main assumptions throughout the paper and review some well-posedness results. In Section \ref{potentialwellsec}, we introduce the potential well and recall some useful lemmas. In Section \ref{glb}, we first prove the global existence when the energy source dominates the energy sink, and then establish the global existence results in the case that the initial data lies in the potential well. In Section \ref{decay}, we establish the energy decay results under the different conditions of relaxation kernel. In Section \ref{blo}, we analyze the finite time blow-up of solutions for problem \eqref{equation} when the source term is stronger than the dissipative effect.
	
Throughout this paper, we denote the standard $L^p(\Omega)$-norm by $\|\cdot\|_p=\|\cdot\|_{L^p(\Omega)},$ $\R^+=[0,+\infty).$ Let $C$ be a generic positive constants which may change from one line to another line, even in the same line. 
\section{Preliminary}\label{Pre}
In this section, we will introduce the main assumptions in this paper and recall some well-posedness results established in \cite{Guo2014hadamard}.

To define a proper function space for initial data, we denote by $\mu(s)=-k^\prime(s)$ and let the weighted Hilbert space $L^2_{\mu}(\R^+;H^1_0(\Omega))$ be the collection of all functions $u:[0,+\infty)\to H^1_0(\Omega)$ satisfying $$\int_{0}^{+\infty} \int_{\Omega} \mu(s) |\nabla u(x,s)|^2 dx ds<+\infty.$$ In particular, the space $L^2_{\mu}(\R^{-};H_0^1(\Omega))$ consists of all functions $u:(-\infty,0]\to H^1_0(\Omega),$ such that $u(-t)\in L^2_{\mu}(\R^+;H_0^1(\Omega))$. Similarly, we can define the weighted Hilbert spaces $L^2_{\mu}(\R^{+};L^2(\Omega))$ and $L^2_{\mu}(\R^{-};L^2(\Omega))$.
	 
The following assumptions will be imposed throughout this paper.
\begin{assumption}\label{Assumption1}
\quad
\begin{itemize}
\item The function $f(u)$ is defined as in \eqref{f(u)}, where  $1< p_1<\cdots<p_r\le 5$ and $1\le q_1<\cdots<q_l\le 5$;
\item $m\ge 1,$ $\frac{m+1}{m}p_r<6$ and $\frac{m+1}{m}q_l<6$;
\item The relaxation kernel $k\in C^2(\R^{+})$ satisfying $k^{\prime}(s)<0$ for all $s>0$ and $k(\infty)=1$;
\item The function $\mu(s)=-k^{\prime}(s)\in C^1(\R^+)\cap L^1(\R^+)$ satisfying $\mu^{\prime}(s)\le 0$ for all $s>0$ and $\mu(+\infty)=0$;
\item The function $u_0:\R^-\to H^1_0(\Omega)$ satisfying $u_0\in L^2_{\mu}(\R^-;H^1_0(\Omega)),$ the function $\partial_tu_0:\R^-\to L^2(\Omega)$ and $\partial_tu_0\in L^2_{\mu}(\R^-;L^2(\Omega))$ are weakly continuous at $t=0$. 
\end{itemize}
\end{assumption}
For convenience, we denote by 
\begin{align*}
	p_0:=\max\{p_r,q_l\},
\end{align*} 
then we have
\begin{align*}
p_0\frac{m+1}{m}<6\quad \text{and}\quad |f^{\prime}(u)|\le C(1+|u|^{p_0-1}).
\end{align*}
We also make the following assumption that $p_i\neq q_j$ for any $i\in \{1,\cdots,r\}$ and $j\in \{1,\cdots,l\}$. Indeed, if $p_i=q_j$ for some $i$ and $j$, the terms $a_i|u|^{p_i-1}u$ and $-b_j|u|^{q_j-1}u$ can be written into the following form $(a_i-b_j)|u|^{p_i-1}u$.
	 
In what follows, we will give the definition of weak solution to problem \eqref{equation}.
\begin{definition}\label{weaksolution}
Let $T>0$ be fixed. A function $u(x,t)$ is said to be a weak solution of problem \eqref{equation} defined on the time interval $(-\infty,T],$ if $u\in C([0,T];H_0^1(\Omega))$ with $u_t\in C([0,T];L^2(\Omega))\cap L^{m + 1}(\Omega\times(0,T))$ and
\begin{itemize}
\item $u(x,t)=u_0(x,t)$ for $t\le 0$;
\item For any $t\in[0,T]$ and any test function $\phi\in\mathscr{F},$ we have
\begin{align}\label{solutioneq}
\nonumber&\int_{\Omega}u_t(x,t)\phi(x,t)\,dx-\int_{\Omega}u_t(x,0)\phi(x,0)\,dx-\int_0^t\int_{\Omega}u_t(x,\tau)\phi_t(x,\tau)\,dxd\tau\\
\nonumber&+k(0)\int_0^t\int_{\Omega}\nabla u(x,\tau)\cdot\nabla\phi(x,\tau)\,dxd\tau+\int_0^t\int_0^{+\infty}\int_{\Omega}\mu(s)\nabla u(x,\tau-s)\cdot\nabla\phi(x,\tau)\,dxdsd\tau\\
&+\int_0^t\int_{\Omega}|u_t(x,\tau)|^{m-1}u_t(x,\tau)\phi(x,\tau)\,dxd\tau=\int_0^t\int_{\Omega}f(u(x,\tau))\phi(x,\tau)\,dxd\tau,
\end{align}
where
\begin{align*}
\mathscr{F}=\left\{\phi\in C([0,T];H_0^1(\Omega))\cap L^{m + 1}(\Omega\times(0,T)):\phi_t\in C([0,T];L^2(\Omega))\right\}.
\end{align*}
\end{itemize}
\end{definition}
Let us introduce the history function
\begin{align*}
w^t(x,s)=u(x,t)-u(x,t-s),\quad s\ge 0,
\end{align*}
and define the quadratic energy functional 
\begin{align}\label{quadraticenergyfunctional}
\E(t):=\frac{1}{2}\left(\|u_t(t)\|_2^2+\|\nabla u(t)\|_2^2+\int_{0}^{+\infty}\mu(s)\|\nabla w^t(s)\|^2_2\,ds\right).
\end{align}
In what follows, we will recall the well-posedness of problem \eqref{equation}.
	\begin{theorem}[See \cite{Guo2014hadamard}]\label{thm:1}
Assume that Assumption \ref{Assumption1} holds. Then there exists a local (in time) weak solution $u$ to problem (1.1) defined on the time interval $(-\infty, T]$ for some $T>0$ depending on the initial quadratic energy $\E(0).$ Furthermore, the following energy identity holds on $[0,T]$:
\begin{align}\label{energyidentity}
\nonumber&\E(t)+\int_{0}^{t}\|u_t(\tau)\|_{m+1}^{m+1}\,d\tau-\frac{1}{2}\int_{0}^{t}\int_{0}^{+\infty}\mu'(s)\|\nabla w^{\tau}(s)\|_{2}^{2}\,ds d\tau\\
=&\E(0)+\int_{0}^{t}\int_{\Omega}f(u(x,\tau))u_t(x,\tau)\,dxd\tau.
\end{align}
In addition, if $u_0(0)\in L^{\frac{3(p_0-1)}{2}}(\Omega)$, then $u$ is a unique solution of problem \eqref{equation}.
\end{theorem}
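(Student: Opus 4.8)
\quad The strategy is classical in outline---monotone-operator/semigroup methods for the dissipative part, a truncation of the supercritical source, uniform a priori bounds, and passage to the limit---but every step is complicated by the supercriticality of $f$ and by the full past history in \eqref{equation}. First I would pass to the Dafermos history variable $w(x,t,s)=u(x,t)-u(x,t-s)$, which obeys the transport equation $w_t+w_s=u_t$ with $w(t,0)=0$ and initial value $w(0,s)=u_0(0)-u_0(-s)\in L^2_\mu(\R^+;H^1_0(\Omega))$ (guaranteed by Assumption \ref{Assumption1}). Using $k(0)=1+\int_0^\infty\mu(s)\,ds$, the viscoelastic term collapses to $-\Delta u-\int_0^\infty\mu(s)\Delta w(s)\,ds$, so \eqref{equation} becomes a first-order autonomous system for $(u,u_t,w)$ on the phase space $\mathcal H=H^1_0(\Omega)\times L^2(\Omega)\times L^2_\mu(\R^+;H^1_0(\Omega))$; its linear part generates a $C_0$-semigroup on $\mathcal H$ whose generator produces the favorable term $-\tfrac12\int_0^\infty\mu'(s)\|\nabla w(s)\|_2^2\,ds\ge0$ (since $\mu'\le0$) in the energy balance, while $u_t\mapsto|u_t|^{m-1}u_t$ is a maximal monotone graph in $L^2(\Omega)$, so adding it to the generator keeps the operator $m$-accretive on $\mathcal H$.

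Since $f:H^1_0(\Omega)\to L^2(\Omega)$ is not locally Lipschitz when $3<p_0\le5$, I would replace $f$ by globally Lipschitz approximations $f_n\to f$ (composing $f$ with a smooth cutoff at height $n$); then $f_n:H^1_0(\Omega)\to L^2(\Omega)$ is Lipschitz on bounded sets, and a contraction-mapping argument (or a Galerkin scheme) produces a unique local weak solution $u_n$ of the problem with source $f_n$, satisfying the energy identity \eqref{energyidentity} with $f$ replaced by $f_n$. The heart of the matter is a bound on $u_n$ independent of $n$ on a common time interval: in the energy identity the memory term and the damping integral $\int_0^t\|u_{n,t}\|_{m+1}^{m+1}\,d\tau$ have the right sign, whereas Young's inequality gives $\int_\Omega f_n(u_n)u_{n,t}\,dx\le\tfrac12\|u_{n,t}\|_{m+1}^{m+1}+C\big(1+\|u_n\|_{\frac{m+1}{m}p_0}^{\frac{m+1}{m}p_0}\big)$; absorbing the first term and invoking $\tfrac{m+1}{m}p_0<6$ together with $H^1_0(\Omega)\hookrightarrow L^6(\Omega)$ yields $\E_n(t)\le\E(0)+C\int_0^t\big(1+\E_n(\tau)^{\alpha}\big)\,d\tau$ with $\alpha=\tfrac{m+1}{2m}p_0$. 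The resulting differential inequality bounds $\E_n$ uniformly on a time interval $[0,T]$ whose length depends only on $\E(0)$, which is precisely the assertion of the theorem (and the reason $T$ is only local).

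From the uniform bounds, $u_n\rightharpoonup u$ weakly-$*$ in $L^\infty(0,T;H^1_0(\Omega))$, $u_{n,t}\rightharpoonup u_t$ weakly-$*$ in $L^\infty(0,T;L^2(\Omega))$ and weakly in $L^{m+1}(\Omega\times(0,T))$; Aubin--Lions then gives $u_n\to u$ strongly in $C([0,T];L^{6-\varepsilon}(\Omega))$, hence $f_n(u_n)\to f(u)$ a.e., and the uniform bound of $f_n(u_n)$ in $L^{6/p_0}(\Omega\times(0,T))$ with $6/p_0>\tfrac{m+1}{m}$ yields, via Vitali's theorem, strong convergence in $L^{\frac{m+1}{m}}(\Omega\times(0,T))$, so the weak limit is $f(u)$; the weak limit of $|u_{n,t}|^{m-1}u_{n,t}$ is identified with $|u_t|^{m-1}u_t$ by Minty's monotonicity trick, and the (linear) memory term passes to the limit directly. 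Thus $u$ is a weak solution in the sense of Definition \ref{weaksolution}. For the energy identity \eqref{energyidentity} of the limit, weak lower semicontinuity of the quadratic energy \eqref{quadraticenergyfunctional} and of the damping integral gives ``$\le$'', and the reverse inequality is obtained by justifying $u_t$ as a test function in \eqref{solutioneq} through a time-mollification (Steklov average) argument, using that $f(u)\in L^\infty(0,T;L^{\frac{m+1}{m}}(\Omega))$ pairs with $u_t\in L^{m+1}(\Omega\times(0,T))$, so $f(u)u_t\in L^1(\Omega\times(0,T))$ and $\tfrac{d}{dt}\int_\Omega F(u)\,dx=\int_\Omega f(u)u_t\,dx$ with $F'=f$.

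Finally, for uniqueness: under $u_0(0)\in L^{\frac{3(p_0-1)}{2}}(\Omega)$ one propagates this extra integrability so that \emph{every} weak solution, not only the constructed one, satisfies \eqref{energyidentity}; then for two solutions $u,v$ with the same data, setting $z=u-v$ and subtracting the identities, the inequalities $(|u_t|^{m-1}u_t-|v_t|^{m-1}v_t)(u_t-v_t)\ge c|z_t|^{m+1}\ge0$ and $-\tfrac12\int\mu'\|\nabla w_z\|_2^2\ge0$ (with $w_z$ the history variable of $z$) sit on the left, while $|f(u)-f(v)|\le C(1+|u|^{p_0-1}+|v|^{p_0-1})|z|$ lets one bound $\int_0^t\int_\Omega(f(u)-f(v))z_t\,dx\,d\tau$ by Hölder's and Young's inequalities, absorbing part into the damping term and the remainder into $C\int_0^t\E_z(\tau)\,d\tau$; since $\E_z(0)=0$, Gronwall forces $u\equiv v$. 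I expect the supercriticality of the source to be the main obstacle throughout: it is what forces the truncation, what makes both the identification of $f_n(u_n)\to f(u)$ and the recovery of the energy \emph{identity} (rather than a mere inequality) delicate in Step 3, and what necessitates the extra integrability of $u_0(0)$ so that \eqref{energyidentity} holds for arbitrary weak solutions in the uniqueness step.
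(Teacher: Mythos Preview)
The paper does not supply its own proof of this theorem: it is explicitly quoted from \cite{Guo2014hadamard} (note the ``See \cite{Guo2014hadamard}'' in the theorem heading and the sentence ``we will recall the well-posedness results'' preceding it), so there is nothing in the manuscript to compare your argument against line by line.

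That said, your sketch is a faithful reconstruction of the strategy actually carried out in \cite{Guo2014hadamard}: passage to the Dafermos history variable and the phase space $H^1_0\times L^2\times L^2_\mu(\R^+;H^1_0)$, an $m$-accretive damping plus a globally Lipschitz truncation $f_n$ of the supercritical source, the uniform a priori bound coming from the inequality $\int f_n(u_n)u_{n,t}\le\tfrac12\|u_{n,t}\|_{m+1}^{m+1}+C(1+\|u_n\|_{\frac{m+1}{m}p_0}^{\frac{m+1}{m}p_0})$ together with $\tfrac{m+1}{m}p_0<6$, and Aubin--Lions/Vitali/Minty for the passage to the limit. One point worth tightening: in the uniqueness step, the role of the hypothesis $u_0(0)\in L^{\frac{3(p_0-1)}{2}}(\Omega)$ is not primarily to recover the energy identity for arbitrary weak solutions, but rather to make the difference estimate close. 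After writing $|f(u)-f(v)|\le C(1+|u|^{p_0-1}+|v|^{p_0-1})|z|$ one needs the factor $|u|^{p_0-1}$ (and $|v|^{p_0-1}$) to lie in a Lebesgue space compatible with $z\in H^1_0\hookrightarrow L^6$ and the available integrability of $z_t$; when $p_0>3$ this is strictly more than what $H^1_0$ provides, and the extra $L^{\frac{3(p_0-1)}{2}}$ integrability of the initial datum is precisely what is propagated (via the approximate solutions) to supply it. Your final Gronwall step is correct in spirit, but the H\"older splitting you allude to should be made explicit with this exponent in mind.
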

\begin{remark}\label{rmk:2}
More precisely, the lifespan $T$ in Theorem \ref{thm:1}  depends only on an upper bound of $\E(0),$ i.e., $T=T(M)$ for any initial data satisfying $\E(0)\leq M$ for some $M >0.$
	\end{remark}
Since $u(t)\in H^1_0(\Omega)\subset L^{p_0+1}(\Omega)$ for all $t\in [0,T),$ where $T$ is the lifespan of the solution, we can define the total energy functional
\begin{align*}
E(t)=\E(t)-\int_{\Omega}F(u(t))\,dx,\,\,\forall\,\,t\in [0,T),
\end{align*}
where $F(u)=\int_{0}^{u}f(s)\,ds.$ Then the energy identity \eqref{energyidentity} can be rewritten into the following form
	\begin{align}\label{Eenergyidentity}
		E(t)+\int_{0}^{t}\int_{\Omega}|u_t(x,\tau)|^{m+1}\,dxd\tau-\frac{1}{2}\int_{0}^t\int_{0}^{+\infty}\mu^{\prime}(s)\|\nabla w^{\tau}(s)\|_2^2\,dsd\tau=E(0).
\end{align}
Moreover, we also have
\begin{align}\label{E'}
E^{\prime}(t)=-\|u_t(t)\|_{m+1}^{m+1}+\frac{1}{2}\int_{0}^{+\infty}\mu^{\prime}(s)\|\nabla w^t(s)\|_2^2\,ds\le 0,
\end{align}
which implies that the total energy $E(t)$ is non-increasing in time.
	
	We would also like to state the global existence results in the case that $m\ge p_0$.
	\begin{theorem}[See \cite{Guo2014hadamard}]
Under the same assumptions as in Theorem \ref{thm:1},		if $u_0(0)\in L^{p_0+1}(\Omega)$ and $m\ge p_0$, then the weak solution of problem \eqref{equation} globally exists.
	\end{theorem}
	\section{Potential well}\label{potentialwellsec}
	This section is devoted to a detailed discussion of the potential well and its depth, which will play a crucial role in this paper. Inspired by the idea in \cite{Guo2018energy}, we introduce a suitable notion of a potential well for problem \eqref{equation}. Let $l_0$ be the index such that $q_{l_0}<p_1<q_{l_0+1}$ if it exists. Otherwise, let $l_0=0,$ if $p_1<q_1$ and $l_0=l,$ if $q_l<p_1.$ Define the functional 
\begin{align*}
I(v)=&\frac{1}{2}\left(\|\nabla v(0)\|_2^2+\int_{0}^{+\infty}\mu(s)\|\nabla v(0)-\nabla v(-s)\|_2^2\,ds\right)\\
&-\sum_{i=1}^{r}\frac{a_i}{p_i+1}\|v(0)\|_{p_i+1}^{p_i+1}+\sum_{j=1}^{l_0}\frac{b_j}{q_j+1}\|v(0)\|_{q_j+1}^{q_j+1}
\end{align*}
for $v\in L^2_{\mu}(\R^-;H^1_0(\Omega))$ which is weakly continuous at $t=0$, and let the constant $d$ be the depth of potential well defined by
	\begin{align}\label{d0}
		d:=\inf_{v\in \mathcal{M}}I(v),
	\end{align}
	where $\mathcal{M}$ is the Nehari manifold given by
	\begin{align*}
		\mathcal{M}:=\Big\{&v\in L^2_{\mu}(\R^{-};H^1_0(\Omega))\backslash\{0\}:v\ \text{is weakly continuous at $t=0$, and}\\
		&\|\nabla v(0)\|_2^2+\int_{0}^{+\infty}\mu(s)\|\nabla v(0)-\nabla v(-s)\|_2^2\,ds+\sum_{j=1}^{l_0}b_j\|v(0)\|_{q_j+1}^{q_j+1}=\sum_{i=1}^{r}a_i\|v(0)\|_{p_i+1}^{p_i+1}\Big\}.
	\end{align*}
In what follows, we introduce the potential well
	\begin{align*}
		\W:=\{v\in L^2_{\mu}(\R^{-};H^1_0(\Omega)):v\ \text{is weakly continuous at}\ t=0,\ I(v)<d\}
	\end{align*}
and for $p_i\in [1,5],$ denote by $\gamma_i$ the best constant for the Sobolev inequality given by
	\begin{align}\label{gamma}
		\gamma_i=\sup\{\|u\|_{p_i+1}:u\in H^1_0(\Omega),\|\nabla u\|_2=1\}<+\infty.
	\end{align}
Moreover, we also define a function $G:\R^+\to \R$ by
	\begin{align*}
		G(y)=y-\sum_{i=1}^{r}\frac{a_i}{p_i+1}(2\gamma_i^2y)^{\frac{p_i+1}{2}}.
	\end{align*} 
	Suppose that the function $G(y)$ attains its maximum at $y=y_0$ and denote the corresponding maximum value of $G(y)$ by $d_0$. Obviously, $y_0$ satisfies
\begin{align}\label{y0}
\sum_{i=1}^{r}a_i\left(2\gamma_i^2\right)^{\frac{p_i+1}{2}}y_0^{\frac{p_i-1}{2}}=2
\end{align}
and 
\begin{align}\label{d00}
d_0=G(y_0)=\sum_{i=1}^{r}\left(\frac{1}{2}-\frac{1}{p_i+1}\right)a_i\left(2\gamma_i^2y_0\right)^{\frac{p_i+1}{2}}>0.
\end{align}
In what follows, we present several lemmas to better characterize $d$ and $d_0$.
\begin{lemma}\label{lem:3.3}
The constant $d$ defined as in \eqref{d0} is nonnegative and coincides with the mountain pass level, that is
	\begin{align}\label{d0m}
		d=\inf_{v(0)\neq0}\sup_{\lambda> 0}I(\lambda v)\ge0. 
	\end{align}
\end{lemma}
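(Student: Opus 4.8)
The plan is to establish \eqref{d0m} in two inequalities, using a fibering-map analysis along rays $\lambda\mapsto I(\lambda v)$ and then comparing with the Nehari characterization. First I would fix $v\in L^2_\mu(\R^-;H^1_0(\Omega))$ with $v(0)\neq 0$ and weakly continuous at $t=0$, and study the scalar function
\[
\varphi_v(\lambda):=I(\lambda v)=\frac{\lambda^2}{2}\,Q(v)-\sum_{i=1}^r\frac{a_i\lambda^{p_i+1}}{p_i+1}\|v(0)\|_{p_i+1}^{p_i+1}+\sum_{j=1}^{s_0}\frac{b_j\lambda^{q_j+1}}{q_j+1}\|v(0)\|_{q_j+1}^{q_j+1},
\]
where $Q(v):=\|\nabla v(0)\|_2^2+\int_0^\infty\mu(s)\|\nabla v(0)-\nabla v(-s)\|_2^2\,ds>0$. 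By the definition of $s_0$ (so that $q_j<p_1\le p_i$ for all $j\le s_0$ and all $i$), the smallest exponent appearing with a negative coefficient, namely $p_1+1$, strictly exceeds every exponent appearing with a positive coefficient ($2$ and the $q_j+1$ with $j\le s_0$). Hence $\varphi_v(\lambda)>0$ for $\lambda$ small, $\varphi_v(\lambda)\to-\infty$ as $\lambda\to\infty$, and I would check (via the sign structure of $\varphi_v'$, which has exactly one sign change from $+$ to $-$ on $(0,\infty)$ once one factors out a power of $\lambda$ and inspects the remaining polynomial: the coefficients go positive then negative as exponents increase) that $\varphi_v$ has a unique critical point $\lambda^*=\lambda^*(v)>0$, which is its global maximum, and that $\lambda^* v\in\mathcal M$ because $\varphi_v'(\lambda^*)=0$ is exactly the Nehari relation scaled appropriately. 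This gives, for every admissible $v$, $\sup_{\lambda>0}I(\lambda v)=I(\lambda^* v)\ge d$, hence $\inf_{v(0)\neq0}\sup_{\lambda>0}I(\lambda v)\ge d$; conversely any $w\in\mathcal M$ satisfies $\lambda^*(w)=1$, so $\sup_{\lambda>0}I(\lambda w)=I(w)$, giving the reverse inequality and the equality in \eqref{d0m}.

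For positivity $d>0$, I would produce a uniform lower bound for $I(w)$ over $w\in\mathcal M$. On $\mathcal M$ we may write $I(w)=I(w)-\tfrac12\big(Q(w)+\sum_{j\le s_0}b_j\|w(0)\|_{q_j+1}^{q_j+1}-\sum_i a_i\|w(0)\|_{p_i+1}^{p_i+1}\big)$ and regroup to obtain
\[
I(w)=\sum_{i=1}^r\Big(\frac12-\frac1{p_i+1}\Big)a_i\|w(0)\|_{p_i+1}^{p_i+1}-\sum_{j=1}^{s_0}\Big(\frac12-\frac1{q_j+1}\Big)b_j\|w(0)\|_{q_j+1}^{q_j+1}.
\]
Since $p_i>1$ every bracket in the first sum is positive, but the second sum is a genuine obstruction — here is where the energy sink complicates matters, and the argument must differ from the homogeneous case. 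To handle it I would use the Nehari constraint together with the Sobolev inequalities \eqref{gamma}: from $Q(w)\le\sum_i a_i\|w(0)\|_{p_i+1}^{p_i+1}\le\sum_i a_i\gamma_i^{p_i+1}Q(w)^{(p_i+1)/2}$ one forces $Q(w)\ge c_0>0$ for a fixed constant $c_0$ (solve the scalar inequality $1\le\sum_i a_i\gamma_i^{p_i+1}Q^{(p_i-1)/2}$ for a lower bound on $Q$), which in turn bounds $\|w(0)\|_{H^1_0}$ and hence every $\|w(0)\|_{q_j+1}$ from below away from $0$; then, comparing the two sums after again invoking the Nehari identity to express the source energy in terms of $Q(w)$ and the sink terms, one checks the net quantity stays bounded below by a positive constant. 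The main obstacle is exactly this last bookkeeping: showing that, after subtracting the sink contribution, $I$ is still coercive from below on $\mathcal M$, i.e.\ that the constraint prevents the $q_j$-terms from eating up the $p_i$-terms. I expect this to follow because on $\mathcal M$ the Nehari relation ties the sink energy to $Q(w)+$ (source energy), so the sink term can be absorbed into a fraction of the first sum, leaving a strictly positive remainder; making this absorption quantitative, using $p_1+1>q_{s_0}+1$ and the lower bound $Q(w)\ge c_0$, is the crux.

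Finally, I would remark that the same fibering argument shows $d$ equals the min over $v$ of $\varphi_v(\lambda^*(v))$ and that the infimum is attained or approached by a minimizing sequence with $Q$ bounded above and below, which is convenient later; but for the present statement only the inequalities above are needed, and the proof concludes by combining them.
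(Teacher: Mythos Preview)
Your fibering-map argument for the equality is correct and essentially identical to the paper's: both identify the unique critical point $\lambda^*(v)>0$ of $\lambda\mapsto I(\lambda v)$, observe that $\lambda^*(v)\,v\in\mathcal M$, and compare the two infima. One small point the paper makes explicit and you do not: for $v\in\mathcal M$ one must check $v(0)\neq 0$ (if $v(0)=0$, the Nehari identity reduces to $\int_0^\infty\mu(s)\|\nabla v(-s)\|_2^2\,ds=0$, whence $v=0$, contradicting $v\in\mathcal M$); this is needed so that $\mathcal M$ sits inside the admissible class for the mountain-pass infimum.

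Your positivity argument, however, contains a step that fails. The claim that ``$Q(w)\ge c_0$ in turn bounds $\|w(0)\|_{H^1_0}$ and hence every $\|w(0)\|_{q_j+1}$ from below away from $0$'' is wrong on both counts: $Q(w)$ includes the memory integral, so a lower bound on $Q(w)$ yields no lower bound on $\|\nabla w(0)\|_2$; and the Sobolev embedding runs the wrong direction for lower bounds on $\|w(0)\|_{q_j+1}$. Fortunately this detour is unnecessary. From your own identity on $\mathcal M$ and the ordering $q_j<p_1\le p_i$ (so that $\tfrac12-\tfrac1{q_j+1}\le\tfrac12-\tfrac1{p_1+1}\le\tfrac12-\tfrac1{p_i+1}$), one gets directly
\[
I(w)\;\ge\;\Big(\tfrac12-\tfrac1{p_1+1}\Big)\Big(\sum_{i=1}^{r} a_i\|w(0)\|_{p_i+1}^{p_i+1}-\sum_{j\le s_0}b_j\|w(0)\|_{q_j+1}^{q_j+1}\Big)\;=\;\frac{p_1-1}{2(p_1+1)}\,Q(w)\;\ge\;\frac{p_1-1}{2(p_1+1)}\,c_0\;>\;0,
\]
the middle equality being precisely the Nehari constraint and $c_0$ your own lower bound for $Q$ on $\mathcal M$. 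The paper takes a different route to positivity: rather than working on $\mathcal M$, it discards the sink and memory contributions and uses the Sobolev constants $\gamma_i$ to obtain the uniform lower bound $\sup_{\lambda>0}I(\lambda v)\ge\sup_{y\ge 0}G(y)=d_0>0$; this is the content of the subsequent Lemma~\ref{lem:3.5}, which gives $d\ge d_0$.
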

	\begin{proof}
		
By carrying out some simple calculation, we obtain
\begin{align*}
\partial_{\lambda}I(\lambda v)=\lambda\|\nabla v(0)\|_2^2+\lambda\int_{0}^{+\infty}\mu(s)\|\nabla v(0)-\nabla v(-s)\|_2^2\,ds\\-\sum_{i=1}^{r}\lambda^{p_i}a_i\|v(0)\|_{p_i+1}^{p_i+1}+\sum_{j=1}^{l_0}\lambda^{q_j}b_j\|v(0)\|_{q_j+1}^{q_j+1}.
\end{align*}
Notice that for any fixed $v\in L^2_{\mu}(\R^-;H^1_0(\Omega))$ which is weakly continuous at $t=0$ with $v(0)\neq 0$, the equation $\partial_{\lambda}I(\lambda v)=0$ has a unique root $\lambda_0$ on $(0,\infty)$ since $q_{l_0}<p_1<\cdots <p_r$, i.e.,
\begin{align*}
&\lambda_0^2\|\nabla v(0)\|_2^2+\lambda_0^2\int_{0}^{+\infty}\mu(s)\|\nabla v(0)-\nabla v(-s)\|_2^2\,ds\\
=&\sum_{i=1}^{r}\lambda_0^{p_i+1}a_i\|v(0)\|_{p_i+1}^{p_i+1}-\sum_{j=1}^{l_0}\lambda^{q_j+1}_0b_j\|v(0)\|_{q_j+1}^{q_j+1},
\end{align*}
which implies that $\lambda_0v\in \mathcal{M}$. In addition, the maximum value of $I(\lambda v)$ for $\lambda>0$ is positive and is achieved at $\lambda_0>0$. Then, we have
\begin{align*}
d=\inf_{u\in \mathcal{M}}I(u)\le I(\lambda_0v)=\sup_{\lambda>0}I(\lambda v).
\end{align*}
Therefore, we obtain
\begin{align*}
d\le \inf_{v(0)\neq0}\sup_{\lambda>0}I(\lambda v).
\end{align*}
On the other hand, for any $v\in \mathcal{M}$, we claim that $v(0)\neq0$. Indeed, if $v(0)=0$, it follows from the definition of $\mathcal{M}$ that
\begin{align*}
\int_{0}^{+\infty}\mu(s)\|\nabla v(-s)\|_2^2\,ds=0.
\end{align*}
Thus, we obtain $v=0,$ which contradicts with $v\in \mathcal{M}$. Therefore, we deduce from the above argument that $$0<\sup_{\lambda>0}I(\lambda v)=I(v)$$ for any $v\in \mathcal{M}.$ Thus, we obtain
		\begin{align*}
			d=\inf_{u\in \mathcal{M}}I(u)=\inf_{u\in \mathcal{M}}\sup_{\lambda>0}I(\lambda u)\ge \inf_{v(0)\neq0}\sup_{\lambda>0}I(\lambda v)\ge 0.
		\end{align*}
Hence, we have $$d=\inf_{v\neq0}\sup_{\lambda>0}I(\lambda v).$$
	\end{proof}
	\begin{lemma}\label{lem:3.4}
		The constant $d$ also coincides with the mountain pass level of $J$, i.e.,
\begin{align*}
d=\inf_{u\in H^1_0(\Omega)\backslash\{0\}}\sup_{\lambda>0}J(\lambda u),
\end{align*}
where 
\begin{align*}
J(u)=\frac{1}{2}\|\nabla u\|_2^2-\sum_{i=1}^{r}\frac{a_i}{p_i+1}\|u\|_{p_i+1}^{p_i+1}+\sum_{j=1}^{l_0}\frac{b_j}{q_j+1}\|u\|_{q_j+1}^{q_j+1}.
\end{align*}
\end{lemma}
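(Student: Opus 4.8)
The plan is to reduce everything to Lemma~\ref{lem:3.3}, which already identifies $d$ with the mountain pass level $\inf_{v(0)\neq0}\sup_{\lambda>0}I(\lambda v)$ taken over histories $v\in L^2_{\mu}(\R^-;H^1_0(\Omega))$ that are weakly continuous at $t=0$ with $v(0)\neq0$. The guiding observation is that $I(\lambda v)$ depends on $v$ only through its trace $v(0)$ and through the \emph{nonnegative} memory contribution $\int_0^\infty\mu(s)\|\nabla v(0)-\nabla v(-s)\|_2^2\,ds$. Expanding the definition of $I$ and collecting powers of $\lambda$ gives the exact identity
\begin{align*}
	I(\lambda v)=J(\lambda v(0))+\frac{\lambda^2}{2}\int_0^\infty\mu(s)\,\|\nabla v(0)-\nabla v(-s)\|_2^2\,ds ,
\end{align*}
so the whole lemma comes down to controlling the extra term in both directions.

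For the inequality $d\ge\inf_{u\in H^1_0(\Omega)\setminus\{0\}}\sup_{\lambda>0}J(\lambda u)$, note that the memory integral is $\ge0$ for every $\lambda>0$, so the identity yields $I(\lambda v)\ge J(\lambda v(0))$ and hence $\sup_{\lambda>0}I(\lambda v)\ge\sup_{\lambda>0}J(\lambda v(0))$ for every admissible $v$ with $v(0)\neq0$. Since $v(0)\in H^1_0(\Omega)\setminus\{0\}$, the right-hand side is bounded below by $\inf_{u\in H^1_0(\Omega)\setminus\{0\}}\sup_{\lambda>0}J(\lambda u)$; taking the infimum over $v$ and invoking Lemma~\ref{lem:3.3} gives the claimed bound. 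For the reverse inequality the key device is to test with the constant (stationary) history: given $u\in H^1_0(\Omega)\setminus\{0\}$, set $v(-s)\equiv u$ for all $s\ge0$. This $v$ is weakly continuous at $t=0$ with $v(0)=u\neq0$, and it belongs to $L^2_{\mu}(\R^-;H^1_0(\Omega))$ because $\int_0^\infty\mu(s)\|\nabla u\|_2^2\,ds=\|\mu\|_{L^1(\R^+)}\|\nabla u\|_2^2<\infty$ by Assumption~\ref{Assumption1}. For this $v$ the memory integral vanishes identically, so $I(\lambda v)=J(\lambda u)$ for all $\lambda>0$; hence $d\le\sup_{\lambda>0}I(\lambda v)=\sup_{\lambda>0}J(\lambda u)$, and taking the infimum over $u$ closes the argument.

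The conceptual crux is simply the reduction identity together with the stationary-history trick, so there is no serious obstacle; the only points deserving a line of verification are that the constant history is genuinely admissible (handled by $\mu\in L^1(\R^+)$ above) and that $\sup_{\lambda>0}J(\lambda u)$ is a true, positive, finite maximum attained at some $\lambda_0>0$. The latter follows exactly as in the proof of Lemma~\ref{lem:3.3}: for $u\neq0$, $J(\lambda u)$ behaves like a strictly positive multiple of $\lambda^2$ as $\lambda\to0^+$ (each $p_i+1>2$, and any $q_j+1$ equal to $2$ enters with a positive sign), while as $\lambda\to\infty$ the dominant power is $\lambda^{p_r+1}$ with negative coefficient $-\tfrac{a_r}{p_r+1}\|u\|_{p_r+1}^{p_r+1}$, so $J(\lambda u)\to-\infty$; continuity then forces the supremum to be positive and attained.
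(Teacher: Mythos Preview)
Your proof is correct and follows essentially the same approach as the paper: both directions rest on the identity $I(\lambda v)=J(\lambda v(0))+\tfrac{\lambda^2}{2}\int_0^\infty\mu(s)\|\nabla v(0)-\nabla v(-s)\|_2^2\,ds$, using nonnegativity of the memory term for one inequality and the constant (stationary) history $v(-s)\equiv u$ for the other. The only cosmetic difference is that the paper routes the reverse inequality through the Nehari manifold $\mathcal{N}$ for $J$ (showing $\inf_{u\neq0}\sup_{\lambda>0}J(\lambda u)=\inf_{\mathcal{N}}J$ and then observing that $u\in\mathcal{N}$ implies the constant history lies in $\mathcal{M}$), whereas you bypass $\mathcal{N}$ entirely and apply Lemma~\ref{lem:3.3} directly to the constant history; your version is marginally more streamlined.
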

\begin{proof}
It follows from the definition of $I$ and $J$ that
\begin{align*}
\sup_{\lambda>0}I(\lambda v)\ge \sup_{\lambda>0}J(\lambda v(0)).
\end{align*}
Thus, we obtain
\begin{align}\label{eq:3.5}
\inf_{v(0)\neq0}\sup_{\lambda>0}I(\lambda v)\ge \inf_{v(0)\neq0}\sup_{\lambda>0}J(\lambda v(0))=\inf_{u\in H^1_0(\Omega)\backslash\{0\}}\sup_{\lambda>0}J(\lambda u).
\end{align}
On the other hand, by carrying out the similar proof of Lemma \ref{lem:3.3}, we obtain
\begin{align}\label{Jd}
\inf_{u\in H^1_0(\Omega)\backslash\{0\}}\sup_{\lambda>0}J(\lambda u)=\inf_{u\in \mathcal{N}}J(u),
\end{align}
where 
\begin{align*}
\mathcal{N}=\{u\in H^1_0(\Omega)\backslash\{0\}:\|\nabla u\|_2^2+\sum_{j=1}^{l_0}b_j\|u\|_{q_j+1}^{q_j+1}=\sum_{i=1}^{r}a_i\|u\|_{p_i+1}^{p_i+1}\}.
\end{align*}
Denote by $$v(t)=u,\quad t\in \R^-.$$ Since $u\in \mathcal{N}$ yields $v\in \mathcal{M}$, we obtain
\begin{align}\label{eq:3.7}
\inf_{u\in \mathcal{N}}J(u)=\inf_{v(t)=u,\ u\in \mathcal{N}}I(v)\ge \inf_{v\in \mathcal{M}}I(v)=d.
\end{align}
Combining \eqref{eq:3.5}-\eqref{eq:3.7}, we deduce 
\begin{align*}
	d=\inf_{u\in H^1_0(\Omega)\backslash\{0\}}\sup_{\lambda>0}J(\lambda u).
\end{align*}
\end{proof}
\begin{lemma}\label{lem:3.5}
The constants $d$ and $d_0$ are defined in \eqref{d0} and \eqref{d00} respectively. Then we have $$d\ge d_0>0.$$
\end{lemma}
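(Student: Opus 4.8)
The plan is to reduce the statement to the mountain--pass characterization of $d$ over $H^1_0(\Omega)$ furnished by Lemma \ref{lem:3.4}, then discard the nonnegative sink terms in $J$, and finally recognize what remains as a lower bound given by the scalar function $G$ whose maximum is $d_0$.

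First I would invoke Lemma \ref{lem:3.4} to write $d=\inf_{u\in H^1_0(\Omega)\setminus\{0\}}\sup_{\lambda>0}J(\lambda u)$. Since $b_j>0$ and $q_j\ge 1$ for every $j$, the terms $\sum_{j=1}^{s_0}\frac{b_j}{q_j+1}\|\cdot\|_{q_j+1}^{q_j+1}$ that enter $J$ with a plus sign are nonnegative, whence
\[
J(w)\;\ge\;\widetilde J(w):=\frac12\|\nabla w\|_2^2-\sum_{i=1}^{r}\frac{a_i}{p_i+1}\|w\|_{p_i+1}^{p_i+1}\qquad\text{for all }w\in H^1_0(\Omega),
\]
so that $\sup_{\lambda>0}J(\lambda u)\ge\sup_{\lambda>0}\widetilde J(\lambda u)$, and it suffices to bound the latter below by $d_0$, uniformly over $u\neq 0$.

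Next, for a fixed $u\in H^1_0(\Omega)\setminus\{0\}$ put $\rho:=\|\nabla u\|_2>0$. Applying the Sobolev inequality \eqref{gamma} in the form $\|u\|_{p_i+1}\le\gamma_i\rho$ and performing the substitution $y=\tfrac12\lambda^2\rho^2$ (a bijection of $(0,\infty)$ onto itself, under which $\tfrac{\lambda^2}{2}\rho^2=y$ and $\lambda^{p_i+1}\rho^{p_i+1}=(2y)^{(p_i+1)/2}$), one obtains
\[
\widetilde J(\lambda u)\;=\;y-\sum_{i=1}^{r}\frac{a_i}{p_i+1}\lambda^{p_i+1}\|u\|_{p_i+1}^{p_i+1}
\;\ge\; y-\sum_{i=1}^{r}\frac{a_i}{p_i+1}\bigl(2\gamma_i^2 y\bigr)^{\frac{p_i+1}{2}}\;=\;G(y).
\]
Passing to the supremum over $\lambda>0$, equivalently over $y>0$, yields $\sup_{\lambda>0}J(\lambda u)\ge\sup_{y>0}G(y)=G(y_0)=d_0$ by \eqref{d00}; taking the infimum over $u\neq 0$ then gives $d\ge d_0$.

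I do not expect any real obstacle here: the whole argument is a monotonicity comparison. The only points worth a line of care are (i) that the $q_j$--terms may legitimately be dropped precisely because $s_0$ indexes the exponents with $q_j<p_1$, so their coefficients in $J$ are $+\,b_j/(q_j+1)>0$; and (ii) that as $\lambda$ sweeps $(0,\infty)$ the quantity $y=\tfrac12\lambda^2\rho^2$ sweeps all of $(0,\infty)$, so that the supremum in $\lambda$ genuinely reproduces $\max_{y>0}G(y)=d_0$. If one prefers to avoid Lemma \ref{lem:3.4}, the identical reasoning applies starting from $d=\inf_{v(0)\neq 0}\sup_{\lambda>0}I(\lambda v)$ of Lemma \ref{lem:3.3}, after additionally discarding the nonnegative memory term $\tfrac{\lambda^2}{2}\int_0^\infty\mu(s)\|\nabla v(0)-\nabla v(-s)\|_2^2\,ds$ and then estimating $v(0)$ via \eqref{gamma}.
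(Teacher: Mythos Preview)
Your proposal is correct and follows essentially the same approach as the paper's proof: both invoke the mountain--pass characterization $d=\inf_{u\neq 0}\sup_{\lambda>0}J(\lambda u)$ from Lemma~\ref{lem:3.4}, drop the nonnegative $q_j$--terms, apply the Sobolev inequality \eqref{gamma}, and recognize the resulting lower bound as $G\bigl(\tfrac12\|\lambda\nabla u\|_2^2\bigr)$, whose supremum over $\lambda>0$ is $d_0$. Your version is in fact more explicit about the bijection $\lambda\leftrightarrow y$ and about why the sink terms can be discarded, but the underlying argument is identical.
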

\begin{proof}
We deduce from Sobolev inequality and the definition of $G$ that
\begin{align}\label{Iv>}
J(\lambda u)\ge \frac{1}{2}\|\lambda\nabla u\|_2^2-\sum_{i=1}^r\frac{a_i\gamma_i^{p_i+1}}{p_i+1}\|\lambda\nabla u\|_{2}^{p_i+1}= G\left(\frac{\|\lambda\nabla u\|^2_2}{2}\right).
\end{align}
Thus, we obtain
\begin{align*}
d=\inf_{u\neq0}\sup_{\lambda>0}J(\lambda u)\ge \inf_{u\neq0}\sup_{\lambda>0}G\left(\frac{\|\lambda\nabla u\|_2^2}{2}\right)=d_0>0.
\end{align*}
\end{proof}
\begin{remark}
From the proof of Lemma \ref{lem:3.5}, we notice that $d=d_0$ if and only if $l_0=0$ and there exists a sequence $\{u_n\},$ such that
\begin{align}\label{condit}
\gamma_i=\lim_{n\to \infty}\frac{\|u_n\|_{p_i+1}}{\|\nabla u_n\|_2},\quad i=1,\cdots,r.
\end{align}
However, in general, condition \eqref{condit} does not hold apart from the special case $r=1$. Consequently, when $l_0=0$ and $r=1$, we have $d=d_0$; otherwise, we only have $d\ge d_0$ and the equality generally does not hold.
\end{remark}
Next, we define two subsets $\mathcal{W}_1$ and $\mathcal{W}_2$ of the potential well, which will be used in subsequent sections:
\begin{align}
&\W_1=\left\{v\in \W:I_0(v)>0\right\}\bigcup\{0\},\label{W_1}\\
&\W_2=\left\{v\in \W:I_0(v)<0\right\},\label{W_2}
\end{align}
where 
\begin{align*}
I_0(v)=\|\nabla v(0)\|_2^2+\int_{0}^{+\infty}\mu(s)\|\nabla v(0)-\nabla v(-s)\|_2^2\,ds-\sum_{i=1}^ra_i\|v(0)\|_{p_i+1}^{p_i+1}+\sum_{j=1}^{l_0}b_j\|v(0)\|_{q_j+1}^{q_j+1}.
\end{align*}
Obviously, we have $\W_1\bigcap\W_2=\emptyset$. Moreover, due to $$I(v)<\inf_{u\in \mathcal{M}}I(u),\quad \forall v\in \W,$$ we have $\mathcal{M}\bigcap\W=\emptyset$, i.e.,
	\begin{align*}
		\{v\in \W:I_0(v)=0\}=\{0\}.
	\end{align*}
	Thus, we also have $$\W_1\bigcup\W_2=\W.$$
	
	\section{Global existence}\label{glb}
In this section, we will establish the global existence results. To begin with, we introduce a well-known lemma used in the sequel.
\begin{lemma}\label{lem:1}
Assume that Assumption \ref{Assumption1} holds, then the weak solution to problem \eqref{equation} is either global or there exists some time $0<T_{max}<+\infty,$ such that
\begin{align*}
\limsup_{t\to T_{max}^-}\E(t)=+\infty.
\end{align*}
\end{lemma}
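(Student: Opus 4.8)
The plan is to argue by the standard continuation/blow-up alternative for the local solution furnished by Theorem \ref{thm:1}. The key observation is Remark \ref{rmk:2}: the lifespan $T$ of the local solution depends only on an upper bound $M$ for the initial quadratic energy $\E(0)$. So I would proceed by contradiction: suppose the solution cannot be continued globally, so there is a maximal existence time $0<T_{max}<\infty$, and suppose moreover that $\E(t)$ does \emph{not} blow up as $t\to T_{max}^-$, i.e. $\limsup_{t\to T_{max}^-}\E(t)=:M<\infty$. I will derive a contradiction by showing the solution can be extended past $T_{max}$.

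First I would make precise the continuation mechanism. Given any time $t_0\in[0,T_{max})$, the state of the system at time $t_0$ consists of the pair $(u(t_0),u_t(t_0))$ together with the full past history $\{u(t_0-s)\}_{s\ge0}$; this data lies in the admissible class of Assumption \ref{Assumption1} (the regularity $u\in C([0,T];H^1_0(\Omega))$, $u_t\in C([0,T];L^2(\Omega))$ and the history membership in $L^2_\mu(\R^-;H^1_0(\Omega))$ are preserved, and the weak-continuity-at-$t=0$ hypothesis is inherited by time translation). Applying Theorem \ref{thm:1} with this data as new initial data produces a weak solution on a time interval of length $T(\E(t_0))$, where by Remark \ref{rmk:2} we may take $T(\E(t_0))\ge \tau_M>0$ for a fixed $\tau_M$ depending only on $M$, as long as $\E(t_0)\le M$. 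Concatenating with the original solution (the variational identity \eqref{solutioneq} and the energy identity \eqref{energyidentity} glue consistently at $t_0$) yields a weak solution on $[0,t_0+\tau_M]$.

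Now I use the hypothesis $\limsup_{t\to T_{max}^-}\E(t)=M<\infty$. Pick $M'>M$; then there is $\delta>0$ such that $\E(t)\le M'$ for all $t\in(T_{max}-\delta,T_{max})$. Choose $t_0\in(T_{max}-\delta,T_{max})$ with $T_{max}-t_0<\tfrac12\tau_{M'}$, where $\tau_{M'}>0$ is the uniform lifespan associated with the bound $M'$ via Remark \ref{rmk:2}. Then the solution restarted at $t_0$ extends to $[0,t_0+\tau_{M'}]$, and $t_0+\tau_{M'}>T_{max}+\tfrac12\tau_{M'}>T_{max}$, contradicting the maximality of $T_{max}$. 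Hence if $T_{max}<\infty$ then necessarily $\limsup_{t\to T_{max}^-}\E(t)=+\infty$, which is the claim.

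The main obstacle is not any hard estimate but rather the bookkeeping needed to legitimately ``restart'' the problem at an interior time $t_0$: one must check that the new initial history (old solution on $(-\infty,t_0]$, which itself already contains a tail $u_0$ on $(-\infty,0]$) satisfies all bullet points of Assumption \ref{Assumption1}, in particular weak continuity at the new initial time and the boundary condition on $\Gamma$, and that the local solution $u\in C([0,T];H^1_0(\Omega))$ indeed realizes the $L^2_\mu(\R^-;H^1_0(\Omega))$ history bound at every interior time (this follows because $\E(t)$ controls $\int_0^\infty\mu(s)\|\nabla w(t,s)\|_2^2\,ds$ and hence, together with $\|\nabla u(t)\|_2$, the weighted history norm via $\nabla u(t-s)=\nabla u(t)-\nabla w(t,s)$ and $\mu\in L^1(\R^+)$). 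I would also note, for completeness, that the uniqueness clause in Theorem \ref{thm:1} is not needed here — the alternative is about existence of a (possibly non-unique) continuation — so no extra $L^{3(p_0-1)/2}$ assumption on the data is required.
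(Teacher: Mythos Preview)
Your argument is correct and follows essentially the same route as the paper: assume $T_{max}<\infty$ with $\E(t)$ bounded near $T_{max}$, restart the local existence (Theorem \ref{thm:1} together with Remark \ref{rmk:2}) at a time $t_0$ close enough to $T_{max}$ that the uniform lifespan $\tau$ overshoots $T_{max}$, and concatenate to contradict maximality. The paper does exactly this, choosing $t_0=T_{max}-\tilde{T}/2$.

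One small point of divergence: the paper \emph{does} invoke the uniqueness clause, observing that $u_0(0)\in H^1_0(\Omega)\subset L^{3(p_0-1)/2}(\Omega)$ (this embedding holds because $p_0\le 5$ under Assumption \ref{Assumption1}), whereas you assert uniqueness is unnecessary. Your concatenation-at-$t_0$ argument indeed avoids needing the restarted solution to agree with the old one on an overlap. However, uniqueness is still implicitly used to make the \emph{statement} of the lemma unambiguous: without it, ``the weak solution'' and the function $t\mapsto\E(t)$ on $[0,T_{max})$ are not well defined, since different continuations could have different quadratic energies. So while your side remark is correct at the level of the extension mechanism, the $L^{3(p_0-1)/2}$ hypothesis is not truly dispensable here, and the paper is right to note it.
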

\begin{proof}
If $u$ is a weak solution to problem \eqref{equation} on $[0,T_0]$ that is not global and there exists a constant $M>0,$ such that $\E(t)\le M$ for all $t\in [0,T_{max})$, where $T_{max}<+\infty$ is the maximum lifespan of $u$ defined by
\begin{align}\label{lifespan}
T_{max}:=\sup\left\{T:\text{there exists a weak solution } \tilde{u} \text{ of \eqref{equation} on $[0,T]$ such that $\tilde{u}=u$ on $[0,T_0]$}\right\}.
\end{align}
It follows from Theorem \ref{thm:1} and Remark \ref{rmk:2} that there exists a $\tilde{T}(M) > 0,$ such that the problem \eqref{equation} admits a unique solution on $(-\infty, \tilde{T}(M)]$ for $\E(0)\le M.$ Now, consider a sequence $t_j\to T_{max}^-$ and let $k$ be a integer such that $t_k+\tilde{T}(M)>T_{max}$. Then, we conclude from Theorem \ref{thm:1} and $\E(t_k) \le M$ that we can extend the solution $u$ of problem \eqref{equation} with $u_0(x,t)=u|_{(-\infty,t_k]}$ up to the time $t_k + \tilde{T}(M).$ This is a contradiction with the definition of $T_{max}.$
	\end{proof}
	Now, we will prove the global existence of the solution when $p_r<q_l$.
	\begin{theorem}\label{thm:p<q}
Suppose that Assumption \ref{Assumption1} is in force and $p_r<q_l$, then the weak solution of problem \eqref{equation} is global.
\end{theorem}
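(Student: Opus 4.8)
By Lemma \ref{lem:1}, it suffices to rule out blow-up of the quadratic energy $\E(t)$ in finite time, so the plan is to derive an a priori bound on $\E(t)$ that is uniform on the maximal existence interval. The key structural observation is that when $p_r<q_s$, the dominant power in the source term $f(u)$ is the \emph{sink} power $q_s$, which carries a minus sign and hence helps rather than hurts. More precisely, I would first estimate the antiderivative $F(u)=\int_0^u f(s)\,ds$. Writing $F(u)=\sum_{i=1}^r \frac{a_i}{p_i+1}|u|^{p_i+1}-\sum_{j=1}^s \frac{b_j}{q_j+1}|u|^{q_j+1}$, and using that $q_s>p_i$ for every $i$ together with Young's inequality, one absorbs each positive term $\frac{a_i}{p_i+1}|u|^{p_i+1}$ into a small multiple of $|u|^{q_s+1}$ plus a constant; this yields a pointwise bound of the form $F(u)\le C$ (after possibly dropping the remaining negative $q_j$-terms), and in particular $\int_\Omega F(u(t))\,dx\le C|\Omega|$ with $C$ independent of $t$.

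With this pointwise control in hand, the rest is a standard energy-continuation argument. From the definition $E(t)=\E(t)-\int_\Omega F(u(t))\,dx$ and the bound just obtained, we get $\E(t)\le E(t)+C|\Omega|$. Since $E$ is non-increasing in time by \eqref{E'}, $E(t)\le E(0)$, hence
\begin{align*}
	\E(t)\le E(0)+C|\Omega|\qquad\text{for all }t\in[0,T_{max}).
\end{align*}
Thus $\E(t)$ stays bounded by a constant depending only on the initial data, which by Lemma \ref{lem:1} forces $T_{max}=+\infty$; that is, the weak solution is global.

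The one point that requires care is the Young's-inequality step: I need $F(u)$ to be bounded \emph{above} by a constant, which is exactly where $p_r<q_s$ (strict inequality, and recall the earlier standing assumption $p_i\neq q_j$) is used, since then $p_i+1<q_s+1$ for all $i$ and the elementary inequality $t^{p_i+1}\le \varepsilon t^{q_s+1}+C_\varepsilon$ applies to the nonnegative quantity $t=|u|$. Choosing $\varepsilon$ small enough that $\sum_{i=1}^r \frac{a_i}{p_i+1}\varepsilon \le \frac{b_s}{q_s+1}$ lets the leading sink term dominate all source contributions pointwise, so that $F(u)\le \sum_{i=1}^r \frac{a_i}{p_i+1}C_\varepsilon =: C$. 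I would also note in passing that this bound does not even use the memory term, the frictional damping, or any smallness of the initial energy — the case $p_r<q_s$ is genuinely "automatically dissipative." The main (and only real) obstacle is organizing the Young's-inequality bookkeeping so that the constant $C$ is manifestly independent of $t$ and of the particular solution, which is routine once the right $\varepsilon$ is fixed.
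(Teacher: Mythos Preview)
Your proposal is correct and follows essentially the same approach as the paper: both bound $\int_\Omega F(u(t))\,dx$ above by a constant via Young's inequality (exploiting $p_i+1<q_s+1$ to absorb each source term into the dominant sink), then combine this with $E(t)\le E(0)$ to get a uniform bound on $\E(t)$ and invoke Lemma~\ref{lem:1}. The only cosmetic difference is that you argue pointwise on $F(u)$ before integrating, whereas the paper works directly with the norms $\|u\|_{p_i+1}^{p_i+1}$ via H\"older and Young; the substance is identical.
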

\begin{proof}
From H\"{o}lder's inequality and Young's inequality, we deduce that there exists a positive constant $C,$ such that
\begin{align*}
\int_{\Omega}F(u(t))\,dx=&\sum_{i=1}^{r}\frac{a_i}{p_i+1}\|u\|_{p_i+1}^{p_i+1}-\sum_{j=1}^{l}\frac{b_j}{q_j+1}\|u\|^{q_j+1}_{q_j+1}\\
\le& \frac{b_l}{q_l+1} \|u\|_{q_l+1}^{q_l+1}+C-\sum_{j=1}^{l}\frac{b_j}{q_j+1}\|u\|^{q_j+1}_{q_j+1}\\
\le& C.
\end{align*}
It follows from inequality \eqref{E'} that
\begin{align*}
\E(t)=E(t)+\int_{\Omega}F(u(t))\,dx\le E(0)+C.
\end{align*}
Hence, we infer from Lemma \ref{lem:1} that $T_{max}=+\infty$, i.e., $u$ is a global weak solution of problem \eqref{equation}. 
\end{proof}
	In the general case, it is very difficult to prove the weak solution of problem \eqref{equation} is global. To overcome the difficulty, we have to impose some restrictions on initial energy and make use of potential well method introduced in \cite{Sattinger1968onglobal,Payne1975saddle}. For more details about potential well method, we refer the readers to \cite{Yacheng2006onpotential,Esquivel-Avila2014blowup,Lian2019global,Dimova2024global,Cavalcanti2023aymptotic,Xu2018global,Ding2020global,Ding2020local} and the references therein.
	
	In Section \ref{potentialwellsec}, we have analyzed the structure of potential well and introduced the subsets $\mathcal{W}_1$ and $\mathcal{W}_2.$ Now, we will show that if $u_0\in \mathcal{W}_1$ and $E(0)<d$, then the weak solution of problem \eqref{equation} is global.
\begin{theorem}\label{globalex}
Assume that Assumption \ref{Assumption1} holds and let $u$ be the weak solution of problem \eqref{equation} on its maximal existence interval $(-\infty,T).$ If $E(0)<d$ and $u_0\in \W_1$. Then the solution $u$ is global, i.e., $T=+\infty$. Furthermore, we have
\begin{align}\label{Ecompar}
0\le E(t)\le E(0)<d\quad \text{and}\quad0\le \frac{p_1-1}{p_1+1}\E(t)\le E(t)
\end{align} 
for all $t\ge0.$
\end{theorem}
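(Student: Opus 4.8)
The plan is to deploy the potential--well (Sattinger--Payne) method, adapted to the full-history formulation: I would first show that the history segment of $u$ never leaves $\W_1$ on the maximal interval, then upgrade this into a uniform bound on the quadratic energy $\E(t)$, invoke the blow-up alternative (Lemma~\ref{lem:1}) to conclude $T=\infty$, and finally read \eqref{Ecompar} off the estimates produced along the way. For $t$ in the lifespan, write $U(t)\in L^2_\mu(\R^-;H^1_0(\Omega))$ for the history segment $U(t)(\sigma)=u(t+\sigma)$, $\sigma\le0$, and set $\Psi(t):=\|\nabla u(t)\|_2^2+\int_0^\infty\mu(s)\|\nabla w(t,s)\|_2^2\,ds$, where $w(t,s)=u(t)-u(t-s)$, so that $2\E(t)=\|u_t(t)\|_2^2+\Psi(t)$; then
\begin{align*}
	I(U(t))=\tfrac12\Psi(t)-\sum_{i=1}^r\tfrac{a_i}{p_i+1}\|u(t)\|_{p_i+1}^{p_i+1}+\sum_{j=1}^{s_0}\tfrac{b_j}{q_j+1}\|u(t)\|_{q_j+1}^{q_j+1},
\end{align*}
and $I_0(U(t))=\Psi(t)-\sum_{i=1}^r a_i\|u(t)\|_{p_i+1}^{p_i+1}+\sum_{j=1}^{s_0}b_j\|u(t)\|_{q_j+1}^{q_j+1}$. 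If $u_0\equiv0$ then $u\equiv0$ and all assertions are trivial, so I may assume $u_0\not\equiv0$, whence $I_0(U(0))>0$.

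For the invariance step, I would start from the identity $E(t)=\tfrac12\|u_t(t)\|_2^2+I(U(t))+\sum_{j=s_0+1}^{s}\tfrac{b_j}{q_j+1}\|u(t)\|_{q_j+1}^{q_j+1}$, which together with $E(t)\le E(0)<d$ (by \eqref{E'}) gives $I(U(t))<d$, so $U(t)\in\W$ for all $t$ and only the sign of $I_0(U(t))$ is in question. Suppose, for contradiction, that $I_0(U(t^*))<0$ for some $t^*\in[0,T)$ and put $t_0=\sup\{t<t^*:I_0(U(t))\ge0\}$, well defined because $I_0(U(0))>0$. The quantities $\|u_t(t)\|_2^2$, $\|\nabla u(t)\|_2^2$ and the $L^{p_i+1}$-, $L^{q_j+1}$-norms of $u(t)$ are continuous in $t$ (since $u\in C([0,T];H^1_0(\Omega))$ and by Sobolev embedding), and so is $E(t)$ (from the energy identity \eqref{Eenergyidentity}, using $u_t\in L^{m+1}(\Omega\times(0,T))$ and $\mu'\in L^1$); hence $\Psi(t)=2\E(t)-\|u_t(t)\|_2^2$, $I(U(t))$ and $I_0(U(t))$ are continuous as well. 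Thus $I_0(U(t_0))=0$, and since $U(t_0)\in\W$ the relation $\{v\in\W:I_0(v)=0\}=\{0\}$ proved at the end of Section~\ref{potentialwellsec} forces $U(t_0)=0$, i.e. $\Psi(t_0)=0$. By continuity $\Psi(t)\to0$ as $t\downarrow t_0$, and since $\sum_i a_i\|u(t)\|_{p_i+1}^{p_i+1}\le\sum_i a_i\gamma_i^{p_i+1}\Psi(t)^{(p_i+1)/2}$ with $(p_i+1)/2>1$ (as $p_i>1$), the source part is of strictly higher order than $\Psi(t)$, so $I_0(U(t))\ge\tfrac12\Psi(t)\ge0$ for $t$ slightly larger than $t_0$, contradicting $I_0(U(t))<0$ on $(t_0,t^*)$. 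Therefore $U(t)\in\W_1$ for all $t\in[0,T)$.

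For the energy bound, I would exploit that $\tfrac{1}{p_i+1}\le\tfrac{1}{p_1+1}$ (since $p_i\ge p_1$) and $\tfrac{1}{q_j+1}>\tfrac{1}{p_1+1}$ for $j\le s_0$ (since $q_j\le q_{s_0}<p_1$), which yield, whenever $I_0(U(t))\ge0$, the structural inequality $I(U(t))-\tfrac{1}{p_1+1}I_0(U(t))\ge\tfrac{p_1-1}{2(p_1+1)}\Psi(t)$, hence $\Psi(t)\le\tfrac{2(p_1+1)}{p_1-1}I(U(t))\le\tfrac{2(p_1+1)}{p_1-1}E(0)$; Sobolev then bounds the source norms, and inserting this into $E(t)=\E(t)-\int_\Omega F(u(t))\,dx$ bounds $\tfrac12\|u_t(t)\|_2^2$, so $\E(t)=\tfrac12(\|u_t(t)\|_2^2+\Psi(t))$ is uniformly bounded on $[0,T)$. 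By Lemma~\ref{lem:1} the solution cannot blow up in finite time, so $T=\infty$. Finally, \eqref{Ecompar} follows by combining $E(t)\le E(0)<d$, the lower bound $E(t)=\tfrac12\|u_t(t)\|_2^2+I(U(t))+\sum_{j>s_0}\tfrac{b_j}{q_j+1}\|u(t)\|_{q_j+1}^{q_j+1}\ge\tfrac12\|u_t(t)\|_2^2+\tfrac{p_1-1}{2(p_1+1)}\Psi(t)\ge0$, and the elementary estimate $\tfrac{p_1-1}{p_1+1}\E(t)=\tfrac{p_1-1}{2(p_1+1)}\|u_t(t)\|_2^2+\tfrac{p_1-1}{2(p_1+1)}\Psi(t)\le\tfrac12\|u_t(t)\|_2^2+\tfrac{p_1-1}{2(p_1+1)}\Psi(t)\le E(t)$, which uses $\tfrac{p_1-1}{p_1+1}<1$.

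The step I expect to be the main obstacle is the invariance argument: it relies on the continuity in $t$ of the history-weighted quantity $\int_0^\infty\mu(s)\|\nabla w(t,s)\|_2^2\,ds$ and of $I(U(t))$, $I_0(U(t))$ --- precisely where the energy identity \eqref{Eenergyidentity} and the weak continuity at $t=0$ from Assumption~\ref{Assumption1} come into play --- together with the careful handling of the degenerate instant $t_0$ at which the history segment $U(t_0)$ vanishes. Once the structural inequality relating $I$, $I_0$ and $\Psi$ is in place, the remaining work is routine bookkeeping.
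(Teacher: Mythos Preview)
Your proposal is correct and follows essentially the same potential-well strategy as the paper: show $U(t)\in\W$ from $I(U(t))\le E(t)<d$, prove invariance of $\W_1$ by a continuity/contradiction argument on $I_0(U(t))$, then extract the bound $\frac{p_1-1}{p_1+1}\E(t)\le E(t)$ and conclude via Lemma~\ref{lem:1}. The one cosmetic difference is your treatment of the instant $t_0$ where $I_0(U(t_0))=0$: the paper observes more directly that $U(t_0)=0$ forces $u(\tau)=0$ for all $\tau\le t_0$, in particular $u_0=0$, contradicting $u_0\not\equiv0$ immediately --- so your local analysis of $\Psi(t)$ near $t_0$, while correct, is not needed.
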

\begin{proof}
Define the shift of $u$ by
\begin{align*}
u^t(\tau):=u(t+\tau),\ \tau\in \R^{-},
\end{align*}
it follows from \eqref{E'} that $E(t)$ is non-increasing, which implies that 
\begin{align}\label{<d0}
I(u^t)\le E(t)\le E(0)< d,\quad t\in (0,T).
\end{align}
Hence, we conclude that $u^t\in \W$ for all $t\in [0,T)$.

We claim that $u^t\in \W_1$ for all $t\in (0,T)$. If $u_0=0$, then we infer from the uniqueness of the solution of problem \eqref{equation} that $u(t)\equiv 0$ for any $t\in (0,T),$ which entails that $u^t\in \W_1$ for all $t\in (0,T)$. Thus, we only need to consider the case that $u_0\neq 0.$ We argue by contradiction. Assume that there exists $t_1\in (0,T),$ such that $u^{t_1}\notin \W_1.$ Thus, we obtain $u^{t_1}\in \W_2.$ We conclude from the energy identity \eqref{energyidentity} that $\E(t)$ is continuous in $t,$ which along with $u\in C([0,T];H^1_0(\Omega))\bigcap C^1([0,T];L^2(\Omega))$ and 
\begin{align*}
\int_{0}^{+\infty}\mu(s)\|\nabla w^t(s)\|_2^2\,ds=2\E(t)-\|u_t(t)\|_2^2-\|\nabla u(t)\|_2^2,
\end{align*} 
we conclude that $\int_{0}^{+\infty}\mu(s)\|\nabla w^t(s)\|_2^2\,ds$ is continuous in $t.$ Moreover, we infer from $u\in C([0,T];H^1_0(\Omega))\subset C([0,T];L^{p_0+1}(\Omega))$ that $\|u(t)\|_{p_i+1}^{p_i+1}$ and $\|u(t)\|_{q_j+1}^{q_j+1}$ are continuous in $t$. Therefore, we conclude that $\mathcal{I}(t):=I_0(u^t)$ is continuous in $t$. Note that $\mathcal{I}(0)>0$ since $u_0\in \W_1$ and $\mathcal{I}(t_1)<0$ due to $u^{t_1}\in\W_2$. By applying intermediate value theorem, we deduce that there exists some time $t_0\in (0,t_1),$ such that $\mathcal{I}(t_0)=0$. Thus, we obtain $u^{t_0}\in \mathcal{M}$ or $u^{t_0}=0$. If $u^{t_0}=0$, we have $u_0(\tau)=u^{t_0}(\tau-t_0)=0$ for a.e. $\tau\le 0$, which is contradict with $u_0\neq 0$. If $u^{t_0}\in \mathcal{M}$, we obtain
\begin{align*}
I(u^{t_0})\ge d>E(0)\ge E(t_0)\ge I(u^{t_0}).
\end{align*}
Hence, we arrive at a contradiction. Thus, we conclude that $u^t\in \W_1$ for all $t\in (0,T)$.
		
Now, we deduce from $u^t\in \W_1$ that for $t\in [0,T],$
\begin{align*}
\sum_{i=1}^{r}a_i\|u(t)\|_{p_i+1}^{p_i+1}&<\|\nabla u(t)\|_2^2+\int_{0}^{+\infty}\mu(s)\|\nabla w^t(s)\|_2^2\,ds+\sum_{j=1}^{l_0}b_j\|u(t)\|_{q_j+1}^{q_j+1}\\&\le 2\E(t)+\sum_{j=1}^{l_0}b_j\|u(t)\|_{q_j+1}^{q_j+1}.
\end{align*} 
We infer from $q_1\cdots<q_{l_0}<p_1<\cdots<p_r$ that 
\begin{align*}
&\sum_{i=1}^r\frac{a_i}{p_i+1}\|u(t)\|_{p_i+1}^{p_i+1}-\sum_{j=1}^{l}\frac{b_j}{q_j+1}\|u(t)\|_{q_j+1}^{q_j+1}\\
=&\sum_{i=1}^r\frac{a_i}{p_i+1}\|u(t)\|_{p_i+1}^{p_i+1}-\sum_{j=1}^{l_0}\frac{b_j}{q_j+1}\|u(t)\|_{q_j+1}^{q_j+1}-\sum_{j=l_0+1}^{l}\frac{b_j}{q_j+1}\|u(t)\|_{q_j+1}^{q_j+1} \\ 
\le& \frac{1}{p_1+1}\left(\sum_{i=1}^{r}a_i\|u(t)\|_{p_i+1}^{p_i+1}-\sum_{j=1}^{l_0}b_j\|u(t)\|_{q_j+1}^{q_j+1}\right)\\
\le& \frac{2}{p_1+1}\E(t),
\end{align*}
which implies that
\begin{align*}
E(t)=\E(t)-\sum_{i=1}^r\frac{a_i}{p_i+1}\|u(t)\|_{p_i+1}^{p_i+1}+\sum_{j=1}^{l}\frac{b_j}{q_j+1}\|u(t)\|_{q_j+1}^{q_j+1}\ge\frac{p_1-1}{p_1+1}\E(t)\ge 0.
\end{align*}
Combining the above inequality with \eqref{<d0}, we deduce that for any $t\in (0,T),$ $$\E(t)\le\frac{p_1+1}{p_1-1}d.$$ 
Hence, we infer from Lemma \ref{lem:1} that $u$ is a global solution of problem \eqref{equation}.
\end{proof}
\begin{remark}\label{rmk:44}
In the proof of Theorem \ref{globalex}, we have proved  that $u^t\in \W_1$ for any $t\ge0.$ A similar argument shows that if $E(0)<d$ and $u_0\in \W_2,$ then $u^t\in \W_2$ for any $t\in (0,T_{max}),$ where $T_{max}$ is defined as in \eqref{lifespan}.
\end{remark}
\begin{corollary}\label{cor:1}
Assume that Assumption \ref{Assumption1} is in force and let $u$ be the weak solution of problem \eqref{equation} on its maximal existence interval $(-\infty,T).$ If $E(0)<d$ and 
\begin{align*}
u_0(0)\in \mathcal{V}_1:=\left\{u\in H^1_0(\Omega):J(u)<d\ \text{and}\ \|\nabla u\|_2^2+\sum_{j=1}^{l_0}b_j\|u\|_{q_j+1}^{q_j+1}>\sum_{i=1}^ra_i\|u\|_{p_i+1}^{p_i+1}\right\}\bigcup\{0\}.
\end{align*}
Then the solution $u$ of problem \eqref{equation} is global, i.e., $T=+\infty$. Furthermore, we have
\begin{align*}
0\le E(t)\le E(0)<d\quad \text{and}\quad0\le \frac{p_1-1}{p_1+1}\E(t)\le E(t),\quad\text{for all}\ t\ge0.
\end{align*} 
\end{corollary}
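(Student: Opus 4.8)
The plan is to derive Corollary~\ref{cor:1} directly from Theorem~\ref{globalex}: I will show that the two hypotheses $E(0)<d$ and $u_0\in\mathcal{V}_1$ already force $u_0\in\W_1$, after which Theorem~\ref{globalex} produces $T=\infty$ together with the stated energy bounds verbatim. Throughout, I read the condition ``$u_0\in\mathcal{V}_1$'' as a requirement on the initial value $u_0(0)\in H^1_0(\Omega)$ of the history datum.

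First I would verify that $u_0\in\W$. Writing $u(0)=u_0(0)$, $u_t(0)=\partial_t u_0(0)$ and $w(0,s)=u_0(0)-u_0(-s)$, and using $\int_\Omega F(u_0(0))\,dx=\sum_{i=1}^r\frac{a_i}{p_i+1}\|u_0(0)\|_{p_i+1}^{p_i+1}-\sum_{j=1}^s\frac{b_j}{q_j+1}\|u_0(0)\|_{q_j+1}^{q_j+1}$, a direct comparison of $E(0)=\E(0)-\int_\Omega F(u_0(0))\,dx$ with the functional $I$ preceding \eqref{d0} gives
\begin{align*}
	E(0)-I(u_0)=\frac{1}{2}\|\partial_t u_0(0)\|_2^2+\sum_{j=s_0+1}^{s}\frac{b_j}{q_j+1}\|u_0(0)\|_{q_j+1}^{q_j+1}\ge 0,
\end{align*}
so $I(u_0)\le E(0)<d$, i.e.\ $u_0\in\W$.

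Next I would check that $u_0\in\W_1$. The key observation is that the memory term $\int_0^\infty\mu(s)\|\nabla u_0(0)-\nabla u_0(-s)\|_2^2\,ds$ occurring in $I_0$ is nonnegative, so the $\mathcal{V}_1$-condition on $u_0(0)$ is \emph{stronger} than the requirement $I_0(u_0)\ge 0$. If $u_0(0)\ne 0$, the strict inequality defining $\mathcal{V}_1$ reads $\|\nabla u_0(0)\|_2^2+\sum_{j=1}^{s_0}b_j\|u_0(0)\|_{q_j+1}^{q_j+1}-\sum_{i=1}^r a_i\|u_0(0)\|_{p_i+1}^{p_i+1}>0$, and adding the nonnegative memory term yields $I_0(u_0)>0$. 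If $u_0(0)=0$ but $u_0\not\equiv 0$ in $L^2_\mu(\R^-;H^1_0(\Omega))$, then $I_0(u_0)=\int_0^\infty\mu(s)\|\nabla u_0(-s)\|_2^2\,ds$, which equals the squared norm of $u_0$ in that space and is therefore strictly positive since $\mu>0$ on $(0,\infty)$. Finally, if $u_0\equiv 0$ then $u_0\in\W_1$ by definition \eqref{W_1}. In all cases $u_0\in\W_1$.

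With $E(0)<d$ and $u_0\in\W_1$ in hand, Theorem~\ref{globalex} applies and yields $T=\infty$ as well as $0\le E(t)\le E(0)<d$ and $0\le\frac{p_1-1}{p_1+1}\E(t)\le E(t)$ for all $t\ge 0$, which is exactly the assertion. I do not expect a genuine obstacle here; the only two points needing a line of care are the reading of ``$u_0\in\mathcal{V}_1$'' as a constraint on the trace $u_0(0)$ rather than on the whole past history, and the degenerate case $u_0(0)=0$ with nontrivial history, where the strict positivity of $I_0(u_0)$ is supplied entirely by the memory integral and hence relies on $\mu$ being positive on $(0,\infty)$.
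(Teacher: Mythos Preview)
Your proposal is correct and follows exactly the paper's approach: the paper's proof consists of the single sentence ``Notice that $u_0\in \mathcal{V}_1$ yields $u_0\in \mathcal{W}_1$, then the conclusion can be directly derived from Theorem~\ref{globalex},'' and you have simply supplied the details behind that implication (including the reading of $u_0\in\mathcal{V}_1$ as a constraint on the trace $u_0(0)$, which is indeed the intended interpretation). Your careful handling of the cases $u_0(0)\neq 0$, $u_0(0)=0$ with nontrivial history, and $u_0\equiv 0$ is a welcome elaboration of what the paper leaves implicit.
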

\begin{proof}
Notice that the conditions that $u_0(0)\in \mathcal{V}_1$ and $E(0)<d$ imply that $u_0\in \mathcal{W}_1,$ then the conclusion can be directly derived from Theorem \ref{globalex}.
\end{proof}
\begin{remark}\label{rmk:3.9}
We conclude from Lemma \ref{lem:3.4} and its proof that $d=\inf\limits_{u\in \mathcal{N}}J(u).$ Thus, the different potential well can be defined by
\begin{align*}
\mathcal{V}:=\{u\in H^1_0(\Omega):J(u)<d\},
\end{align*}
such that we can prove a global existence result in the potential well $\mathcal{V}$(i.e., Corollary \ref{cor:1}) for the weak solutions of problem \eqref{equation} by carrying out the similar proof of Theorem \ref{globalex}. By comparison, the global existence theorem stated in Theorem \ref{globalex}, which takes full advantage of the entire history, is a better result and more suitable for our system.
	\end{remark}
	\section{Energy decay}\label{decay}
This section is devoted to the proof of the energy decay rates for weak solutions of problem \eqref{equation} in the potential well based on the framework in \cite{Guo2018energy,Irena1993uniform}.

For the sake of convenience, define $D(t)$ by
\begin{align*}
D(t)=\int_{0}^{t}\|u_t(\tau)\|_{m+1}^{m+1}\,d\tau-\frac{1}{2}\int_{0}^{t}\int_{0}^{+\infty}\mu^{\prime}(s)\|\nabla w^{\tau}(s)\|_2^2\,dsd\tau.
\end{align*}
First of all, we will establish some useful inequalities in the case that the relaxation kernel $\mu(s)$ is exponential decay (i.e. $\mu(s)=e^{-as}$ for some $a>0$) or polynomial decay (i.e. $\mu(s)=(1+bs)^{-\alpha}$ for some positive constants $b$ and $\alpha$).
\begin{proposition}\label{proposition}
Assume that Assumption \ref{Assumption1} is in force, $\E(0)\le y_0,$ $E(0)<d_0$ and $u_0\in \W_1,$ let $M_1:=\sup\limits_{\tau\in \R^+}\|u(\tau)\|_{m+1}<+\infty$ and let $u$ be the weak solution of problem \eqref{equation}. Then the following conclusions are true:
\begin{enumerate}[\rm{(}I\rm{)}]
\item If there exists a positive constant $a$ satisfying $\mu^{\prime}(s)+a\mu(s)\le 0$ for any $s\ge 0,$ then there exist positive constants $C_1,$ $T_1$ depending on $a$, $E(0)$ and $M_1,$ such that
\begin{align*}
E(t)\le C_1\left(D(t)^{\frac{2}{m+1}}+D(t)\right)
\end{align*}
for any $t\ge T_1.$
\item If there exist positive constants $b$ and $\theta\in (1,2)$ satisfying $\mu^{\prime}(s)+b\mu(s)^\theta\le 0$ for all $s\ge0,$ let $M_2:=\sup\limits_{\tau\in \R^-}\|\nabla u_0(\tau)\|_2<+\infty,$ then for any $\sigma\in (0,2-\theta)$, there exist positive constants $C_2$, $T_2$ depending on $E(0),$ $M_1$ and $C_3$ depending on $\theta,$ $b$, $\sigma,$ $M_2,$ $E(0),$ such that
\begin{align*}
E(t)\le C_2\left(D(t)^{\frac{2}{m+1}}+D(t)\right)+C_3D(t)^{\frac{\sigma}{\sigma+\theta-1}}
\end{align*}
for any $t\ge T_2.$	
\end{enumerate}	
\end{proposition}
\begin{proof}
It follows from Theorem \ref{globalex} that the weak solution $u$ of problem \eqref{equation} is global. We conclude from $M_1<+\infty$ that $u\in L^{m+1}(\Omega\times (0,t))$ for any $t\ge0.$ Thus, we can choose $\phi=u$ in equality \eqref{solutioneq} as the test function to obtain 
\begin{align}\label{eq:4.4}
\nonumber&\int_{\Omega}u_t(t)u(t)\,dx-\int_{\Omega}u_t(0)u(0)\,dx-\int_{0}^{t}\|u_t(\tau)\|_{2}^{2}\,d\tau+\int_{0}^{t}\|\nabla u(\tau)\|_{2}^{2} \, d\tau\\
\nonumber&+\int_{0}^{t}\int_{\Omega}|u_t(\tau)|^{m-1}u_t(\tau)u(\tau)\,dxd\tau+\int_{0}^{t}\int_{0}^{+\infty}\int_{\Omega}\nabla w^{\tau}(s)\cdot\nabla u(\tau) \mu(s)\,dxdsd\tau\\
=&\int_{0}^{t}\int_{\Omega} f(u(\tau))u(\tau)\,dxd\tau.
\end{align}
By using the definition of $\E(t),$ we obtain
\begin{align}\label{2E}
\nonumber 2\int_{0}^{t}\E(\tau)\,d\tau=&\int_{0}^{t}\|u_t(\tau)\|_2^2\,d\tau+\int_{0}^{t}\|\nabla u(\tau)\|_2^2\,d\tau+\int_{0}^{t}\int_{0}^{+\infty}\mu(s)\|\nabla w^{\tau}(s)\|_2^2\,dsd\tau\\
\nonumber=&-\int_{\Omega}u_t(t)u(t)\,dx+\int_{\Omega}u_t(0)u(0)\,dx+2\int_{0}^{t}\|u_t(\tau)\|_2^2\,d\tau\\
\nonumber&+\int_{0}^{t}\int_{0}^{\infty}\mu(s)\|\nabla w^{\tau}(s)\|_2^2\,dsd\tau-\int_{0}^{t} \int_{0}^{+\infty} \int_{\Omega} \nabla w^{\tau}(s)\cdot\nabla u(\tau)\mu(s)\,dx ds d\tau\\
&-\int_{0}^{t}\int_{\Omega}|u_t(\tau)|^{m-1}u_t(\tau)u(\tau)\,dxd\tau+\int_{0}^{t}\int_{\Omega} f(u(\tau))u(\tau)\,dxd\tau.
\end{align}
In the following, we will estimate each term of the right-hand side of \eqref{2E}. 
\begin{enumerate}[\rm{(}1\rm{)}]
\item The estimate of $\left|-\int_{\Omega}u_t(t)u(t)\,dx+\int_{\Omega}u_t(0)u(0)\,dx\right|.$
		
By applying Cauchy-Schwartz inequality and Sobolev inequality, we have
\begin{align*}
\left|\int_{\Omega}u_t(t)u(t)\,dx\right|\le \frac{1}{2}(\|u_t(t)\|_2^2+\|u(t)\|_2^2)\le \frac{C}{2}\left(\|u_t(t)\|_2^2+\|\nabla u(t)\|_2^2\right)\le C\E(t),
\end{align*}
which implies that
\begin{align}\label{I1}
\nonumber\left|-\int_{\Omega}u_t(t)u(t)\,dx+\int_{\Omega}u_t(0)u(0)\,dx\right|\le& C(\E(t)+\E(0))\\
\nonumber\le&C\left(\frac{p_1+1}{p_1-1}\right)\left(E(t)+E(0)\right)\\
\le&C\left(\frac{p_1+1}{p_1-1}\right)\left(2E(t)+D(t)\right),
\end{align}
where we have used Theorem	\ref{globalex} and the energy identity \eqref{Eenergyidentity}.
\item The estimate of $\int_{0}^{t}\|u_t(\tau)\|_2^2\,d\tau.$ 

We infer from H\"{o}lder's inequality that
\begin{align}\label{I2}
\nonumber\int_{0}^{t}\int_{\Omega}|u_t(\tau)|_2^2\,dxd\tau\le&\left(t|\Omega|\right)^{\frac{m-1}{m+1}}\left(\int_{0}^{t}\int_{\Omega}|u_t(\tau)|^{m+1}\,dxd\tau\right)^{\frac{2}{m+1}}\\
\le& \left(t|\Omega|\right)^{\frac{m-1}{m+1}}D(t)^{\frac{2}{m+1}}.
\end{align}

\item The estimate of $\int_{0}^{t}\int_{\Omega} f(u(\tau))u(\tau)\,dxd\tau.$
	
We conclude from $E(0)<d_0$ that there exists a constant $\delta>0,$ such that $E(0)=d_0-\delta.$ Thus, it follows from Sobolev inequality and \eqref{E'} that for any $\tau\geq 0,$
\begin{align*}
\|\nabla u(\tau)\|_2^2\le& 2E(\tau)+2\int_{\Omega}F(u(x,\tau))\,dx\\
\le& 2d_0-2\delta+\sum_{i=1}^{r}\frac{2a_i}{p_i+1}\gamma_i^{p_i+1}\|\nabla u(\tau)\|_2^{p_i+1},
\end{align*}
which entails that
\begin{align*}
G(y_0)-G\left(\frac{\|\nabla u(\tau)\|_2^2}{2}\right)\geq \delta
\end{align*}
for any $\tau\geq 0.$ Thus, there exists some $\delta_0>0,$ such that
\begin{align}\label{or}
\|\nabla u(\tau)\|_2^2>2y_0+2\delta_0\,\,\quad \text{or}\quad \|\nabla u(\tau)\|_2^2<2y_0-2\delta_0
\end{align}
for any $\tau\geq 0.$ Since
\begin{align*}
\frac{\|\nabla u(0)\|_2^2}{2}\le \E(0)\le y_0
\end{align*}
and $\|\nabla u(t)\|_2^2$ is continuous in $t,$ it follows from \eqref{or} that
\begin{align}\label{<2y_0}
\|\nabla u(\tau)\|_2^2<2y_0-2\delta_0
\end{align}
for any $\tau\geq 0.$ We conclude from \eqref{y0} and \eqref{<2y_0} that there exists a positive constant $\epsilon_0\in (0,1),$ such that
\begin{align}\label{epsil}
\sum_{i=1}^ra_i\gamma_i^{p_i+1}\|\nabla u(\tau)\|_2^{p_i-1}<\frac{1}{2}\sum_{i=1}^{r}a_i\left(2\gamma_i^2\right)^{\frac{p_i+1}{2}}y_0^{\frac{p_i-1}{2}}-\epsilon_0=1-\epsilon_0.
\end{align}
Therefore, we have
\begin{align}\label{I4}
\nonumber\int_{0}^{t}\int_{\Omega} f(u(\tau))u(\tau)\,dxd\tau=&\int_{0}^{t}\left(\sum_{i=1}^{r}a_i\|u(\tau)\|_{p_i+1}^{p_i+1}-\sum_{j=1}^{l}b_j\|u(\tau)\|_{q_j+1}^{q_j+1}\right)\,d\tau\\
\nonumber\le& \int_{0}^{t}\sum_{i=1}^{r}a_i\gamma_i^{p_i+1}\|\nabla u(\tau)\|_{2}^{p_i+1}\,d\tau-\int_{0}^{t}\sum_{j=1}^lb_j\|u(\tau)\|_{q_j+1}^{q_j+1}\,d\tau\\
\nonumber\le& (1-\epsilon_0)\int_{0}^{t}\|\nabla u(\tau)\|_2^2\,d\tau-\int_{0}^{t}\sum_{j=1}^lb_j\|u(\tau)\|_{q_j+1}^{q_j+1}\,d\tau\\
\le& (2-2\epsilon_0)\int_{0}^{t}\E(\tau)\,d\tau-\int_{0}^{t}\sum_{j=1}^lb_j\|u(\tau)\|_{q_j+1}^{q_j+1}\,d\tau.
\end{align}
\item The estimate of $\left|\int_{0}^{t}\int_{\Omega}|u_t(\tau)|^{m-1}u_t(\tau)u(\tau)\,dxd\tau\right|.$

It follows from H\"{o}lder's inequality that
\begin{align}\label{413}
\left|\int_{0}^{t}\int_{\Omega}|u_t(\tau)|^{m-1}u_t(\tau)u(\tau)\,dxd\tau\right|
\le \left(\int_{0}^{t}\int_{\Omega}|u|^{m+1}\,dxd\tau\right)^{\frac{1}{m+1}}\left(\int_{0}^{t}\int_{\Omega}|u_t|^{m+1}\,dxd\tau\right)^{\frac{m}{m+1}}.
\end{align}
If $1\le m\le 5$, we deduce from Sobolev inequality that
\begin{align}\label{416}
\|u\|^{m+1}_{L^{m+1}(\Omega\times (0,t))}\le CM_1^{m-1}\int_{0}^{t}\|\nabla u\|_2^{2}\,d\tau\le CM_1^{m-1}\int_{0}^{t}\E(\tau)\,d\tau.
\end{align}
Along with \eqref{413} and \eqref{416}, it yields 
\begin{align*}
\nonumber\left|\int_{0}^{t}\int_{\Omega}|u_t(\tau)|^{m-1}u_t(\tau)u(\tau)\,dxd\tau\right|\le& CD(t)^{\frac{m}{m+1}}\left(\int_{0}^{t}\E(\tau)\,d\tau\right)^{\frac{1}{m+1}}\\
\le&\frac{\epsilon_0}{2}\int_{0}^{t}\E(\tau)\,d\tau+C_{\epsilon_0}D(t).
\end{align*}
In the case that $m>5,$ it follows from \eqref{413} and Young's inequality that
\begin{align*}
\left|\int_{0}^{t}\int_{\Omega}|u_t(\tau)|^{m-1}u_t(\tau)u(\tau)\,dxd\tau\right|
\le& t^{\frac{1}{m+1}}M_1D(t)^{\frac{m}{m+1}}\\
=&t^{\frac{1}{m+1}}M_1D(t)^{\frac{m-2}{m-1}}D(t)^{\frac{2}{(m-1)(m+1)}}\\
\le& \frac{(m-2)M_1^{\frac{m-1}{m-2}}D(t)+t^{\frac{m-1}{m+1}}D(t)^{\frac{2}{m+1}}}{m-1}.
\end{align*}
Hence, we conclude that
\begin{align}\label{I5}
\nonumber&\left|\int_{0}^{t}\int_{\Omega}|u_t(\tau)|^{m-1}u_t(\tau)u(\tau)\,dxd\tau\right|\\
\le& \frac{\epsilon_0}{2}\int_{0}^{t}\E(\tau)\,d\tau+t^{\frac{m-1}{m+1}}D(t)^{\frac{2}{m+1}}+C_{\epsilon_0,M_1}D(t).
\end{align}

\item The estimate of $\left|\int_{0}^{t} \int_{0}^{+\infty}\int_{\Omega}\nabla w^{\tau}(s)\cdot\nabla u(\tau) \mu(s)\,dsd\tau\right|.$

By using Cauchy-Schwartz inequality and Young's inequality, we obtain
\begin{align}\label{I3}
\nonumber&\left|\int_{0}^{t}\int_{0}^{+\infty}\int_{\Omega} \mu(s)\nabla w^{\tau}(s)\cdot\nabla u(\tau) \, dsd\tau\right|\\
\nonumber\le&\frac{\epsilon_0}{4}\int_{0}^{t}\|\nabla u(\tau)\|_{2}^2\,d\tau+C_{\epsilon_0}\int_{0}^{t}\int_{0}^{+\infty}\mu(s)\|\nabla w^{\tau}(s)\|_2^2\,dsd\tau\\
\le&\frac{\epsilon_0}{2}\int_{0}^{t}\E(\tau)\,d\tau+C_{\epsilon_0}\int_{0}^{t}\int_{0}^{+\infty}\mu(s)\|\nabla w^{\tau}(s)\|_2^2\,dsd\tau.
\end{align}
We conclude from inequalities \eqref{2E}, \eqref{I1}, \eqref{I2}, \eqref{I5}, \eqref{I3} and \eqref{I6} that there exists a positive constant $C$ depending on $M_1$ and $E(0),$ such that for any $t\ge1,$
\begin{align}\label{eq:qq}
\nonumber\int_{0}^{t}\E(\tau)\,d\tau\le &C\left(E(t)+tD(t)^{\frac{2}{m+1}}+\int_{0}^{t}\int_{0}^{+\infty}\mu(s)\|\nabla w^{\tau}(s)\|_2^2\,dsd\tau+D(t)\right)\\
&-\frac{1}{\epsilon_0}\int_{0}^{t}\sum_{j=1}^lb_j\|u(\tau)\|_{q_j+1}^{q_j+1}\,d\tau.
\end{align}
It follows from \eqref{E'}, \eqref{eq:qq} and the definition of $E(t)$ that there exists a positive constant $\mathcal{K}_1$ depending on $M_1$ and $E(0),$ such that for any $t\geq 1,$
\begin{align}\label{la}
\nonumber tE(t)\le&\int_{0}^{t}E(\tau)\,d\tau\le\int_{0}^{t}\left(\E(\tau)+\sum_{j=1}^{l}\frac{b_j}{q_j+1}\|u(\tau)\|_{q_j+1}^{q_j+1}\right)\,d\tau\\
\le&\mathcal{K}_1\left(E(t)+tD(t)^{\frac{2}{m+1}}+\int_{0}^{t}\int_{0}^{+\infty}\mu(s)\|\nabla w^{\tau}(s)\|_2^2\,dsd\tau+D(t)\right).
\end{align}
\item The estimate of $\int_{0}^{t}\int_{0}^{+\infty}\mu(s)\|\nabla w^{\tau}(s)\|_2^2\,dsd\tau.$ In what follows, we will give the estimate of $\int_{0}^{t}\int_{0}^{+\infty}\mu(s)\|\nabla w^{\tau}(s)\|_2^2\,dsd\tau$ under two different assumptions on the relaxation kernel $\mu(s).$ 
\begin{enumerate}[\rm{(}I\rm{)}]
\item If $\mu^{\prime}(s)+a\mu(s)\le 0,$ we have
\begin{align}\label{I6}
\nonumber\int_{0}^{t}\int_{0}^{+\infty}\mu(s)\|\nabla w^{\tau}(s)\|_2^2\,dsd\tau\le &-\frac{1}{a}\int_{0}^{t}\int_{0}^{+\infty}\mu^{\prime}(s)\|\nabla w^{\tau}(s)\|_2^2\,dsd\tau\\
\le& \frac{1}{a}D(t).
\end{align}
Thus, we conclude from inequalities \eqref{la}-\eqref{I6}  that for any $t\geq 2\mathcal{K}_1,$
\begin{align*}
E(t)\le 2\mathcal{K}_1D(t)^{\frac{2}{m+1}}+(1+\frac{1}{a})D(t).
\end{align*}
\item If $\mu^{\prime}(s)+b\mu(s)^\theta\le 0$ with $1<\theta<2,$ we deduce from H\"{o}lder's inequality that
\begin{align}\label{r}
\nonumber&\int_{0}^{t}\int_{0}^{+\infty}\mu(s)\|\nabla w^{\tau}(s)\|_2^2\,ds\\
\nonumber=&\int_{0}^{t}\int_{0}^{+\infty}\left(\mu(s)^{\frac{\sigma \theta}{\sigma+\theta-1}}\|\nabla w^{\tau}(s)\|_2^{\frac{2\sigma}{\sigma+\theta-1}}\right)\left(\mu(s)^{\frac{(1-\sigma)(\theta-1)}{\sigma +\theta-1}}\|\nabla w^{\tau}(s)\|_2^{\frac{2(\theta-1)}{\sigma+\theta-1}}\right)\,dsd\tau\\
\le&\left(\int_{0}^{t}\int_{0}^{+\infty}\mu(s)^\theta\|\nabla w^{\tau}(s)\|_2^2\,dsd\tau\right)^{\frac{\sigma}{\sigma+\theta-1}}\left(\int_{0}^{t}\int_{0}^{+\infty}\mu(s)^{1-\sigma}\|\nabla w^{\tau}(s)\|_2^2\,dsd\tau\right)^{\frac{\theta-1}{\sigma+\theta-1}}.
\end{align}
Since $\mu^{\prime}(s)+b\mu(s)^\theta\leq 0$ and
\begin{align*}
\|\nabla u(\tau)\|_2^2\le \max\left\{\sup_{\tau\le 0}\|\nabla u(\tau)\|_2^2,\sup_{\tau\ge 0}\|\nabla u(\tau)\|_2^2\right\}\le \max\left\{M_2,\frac{2(p_1+1)}{p_1-1}E(0)\right\}
\end{align*}
for any $\tau\in\R,$ we have 
\begin{align}\label{1-sigma}
\nonumber\int_{0}^{t}\int_{0}^{+\infty}\mu(s)^{1-\sigma}\|\nabla w^\tau(s)\|_2^2\,dsd\tau
\le&4t\sup_{\tau\in \R}\|\nabla u(\tau)\|_2^2\int_{0}^{+\infty}(\mu(0)^{1-\theta}+b(\theta-1)s)^{-\frac{1-\sigma}{\theta-1}}\,ds\\
\le &\frac{4(\theta-1)t}{(2-\sigma-\theta)}\mu(0)^{2-\sigma-\theta}\sup_{\tau\in \R}\|\nabla u(\tau)\|_2^2,
\end{align}
which entails that there exists a positive constant $\mathcal{K}_2$ depending on $\mu(0),$ $\sigma,$ $\theta,$ $M_2$, $b$ and $E(0),$ such that for any $t\geq 1,$
\begin{align}\label{I62}
\nonumber\int_{0}^{t}\int_{0}^{+\infty}\mu(s)\|\nabla w^\tau(s)\|_2^2\,ds\le& \mathcal{K}_2t^{\frac{\theta-1}{\sigma+\theta-1}}\left(-\int_{0}^{t}\int_{0}^{\infty}\|\nabla w^\tau(s)\|_2^2\mu^{\prime}(s)\,dsd\tau\right)^{\frac{\sigma}{\sigma+\theta-1}}\\
\le &\mathcal{K}_2tD(t)^{\frac{\sigma}{\sigma+\theta-1}}.
\end{align}
We conclude from inequality \eqref{la} and inequality \eqref{I62} that for any $t\geq 2\mathcal{K}_1,$
\begin{align*}
E(t)\le 2\mathcal{K}_1D(t)^{\frac{2}{m+1}}+D(t)+2\mathcal{K}_1\mathcal{K}_2D(t)^{\frac{\sigma}{\sigma+\theta-1}}.
\end{align*}
\end{enumerate}
\end{enumerate}			
\end{proof}
\begin{remark}
Notice that if $m\le 5$, it follows from Sobolev inequality that for any $\tau\geq 0,$
\begin{align*}
\|u(\tau)\|^2_{m+1}\le C\|\nabla u(\tau)\|_2^2\le 2C\E(\tau)\le \frac{2(p_1+1)}{p_1-1}CE(\tau)\le \frac{2(p_1+1)}{p_1-1}CE(0).
\end{align*}
Consequently, $M_1$ can be bounded by $E(0)$ for $m\le 5,$ such that the finiteness of $M_1$ follows naturally. At this point, $C_1$ and $T_1$ depend only on $a$ and $E(0)$, while $C_2$ and $T_2$ depend only on $E(0)$.
\end{remark}
\begin{remark}
For our proof, it suffices to assume that $\|\nabla u_0(0)\|_2 \le 2y_0$ instead of $\E(0) \le y_0$. On the other hand, we can replace the condition $\E(0)\le y_0$ by $E(0)\le \frac{(p_r+1)(p_1-1)}{(p_r-1)(p_1+1)}d_0$. Indeed, we infer from \eqref{y0} and \eqref{d00} that
\begin{align*}
d_0\le \frac{p_r-1}{p_r+1}y_0.
\end{align*}
If $E(0)\le \frac{(p_r+1)(p_1-1)}{(p_r-1)(p_1+1)}d_0,$ then it follows from \eqref{Ecompar} that
\begin{align*}
\E(0)\le \frac{p_1+1}{p_1-1}E(0)\le \frac{p_r+1}{p_r-1}d_0\le y_0.
\end{align*}
\end{remark}
\begin{remark}
The constants $C_1,\ C_2,\ T_1$ and $T_2$ will tend to $+\infty$ as $E(0)$ tends to $d_0$; while $C_3$ will diverge to $+\infty$ as $\sigma+\theta\to2.$ 
\end{remark}
\begin{theorem}\label{4.5}
Assume that Assumption \ref{Assumption1} is in force, $\E(0)\le y_0,$ $E(0)<d_0,$ $u_0\in \W_1,$ let $M_1:=\sup\limits_{\tau\in \R^+}\|u(\tau)\|_{m+1}<+\infty$ and let $u$ be the weak solution of problem \eqref{equation}. Then the following conclusions are true:
\begin{enumerate}[\rm{(}I\rm{)}]
\item Assume that there exists a positive constant $a_1$ satisfying $\mu^{\prime}(s)+a\mu(s)\le 0$ for any $s\ge0,$ then we have
\begin{enumerate}[\rm{(}1\rm{)}]
\item if $m=1,$ we obtain
\begin{align*} 
E(t)\le K_1e^{-\kappa t}
\end{align*}
for any $t\ge 0,$ where $K_1,$ $\kappa$ are positive constants depending on $a$, $E(0)$ and $M_1.$
\item if $m>1,$ we have
\begin{align*} 
E(t)\le K_2\left(1+t\right)^{\frac{2}{1-m}}
\end{align*}
for any $t\ge 0,$ where $K_2>0$ is a constant depending on $a$, $M_1$ and $E(0).$
\end{enumerate}
\item Assume that there exists a positive constant $a_2$ satisfying $\mu^{\prime}(s)+b\mu(s)^\theta\le 0$ for any $s\ge 0$ with $\theta\in (1,2),$ let $M_2:=\sup\limits_{t\in \R^-}\|\nabla u_0(\tau)\|_2<+\infty$ and $\sigma\in (0,2-\theta),$ then we have
\begin{enumerate}[\rm{(}1\rm{)}]
\item if $m=1,$ we obtain
\begin{align*} 
E(t)\le K_3(1+t)^{-\frac{\sigma}{\theta-1}}
\end{align*}
for any $t\ge 0,$ where $K_3>0$ is a constant depending on $\theta,$ $b,$ $\sigma,$ $M_1,$ $M_2$ and $E(0).$
\item if $m>1,$ we obtain 
\begin{align*} 
E(t)\le K_4(1+t)^{-\min\left\{\frac{\sigma}{\theta-1},\frac{2}{m-1}\right\}}
\end{align*}
for any $t\ge 0$, where $K_4>0$ is a constant depending on $\theta,$ $b,$ $\sigma,$ $M_1,$ $M_2$ and $E(0).$
\end{enumerate}
\end{enumerate}  		
\end{theorem}
\begin{proof}
\begin{enumerate}[\rm{(}I\rm{)}]
\item If $\mu^{\prime}(s)+a\mu(s)\le 0,$ define $\Phi(s):=C_1\left(s^{\frac{2}{m+1}}+s\right),$ then for sufficiently large $t>0$, we deduce from the energy identity \eqref{energyidentity} and Proposition \ref{proposition} that
\begin{align}\label{4.30}
E(t)\le \Phi(D(t))=\Phi(E(0)-E(t)).
\end{align}
Due to the monotone increasing property of $\Phi$ and inequality \eqref{4.30}, we can choose a sufficiently large $T$ depending on $E(0),$ such that
\begin{align}\label{4.31}
\left(I+\Phi^{-1}\right)E(T)\le E(0).
\end{align}
Since $E(t)$ is non-increasing, we similarly conclude from Proposition \ref{proposition} that for any integer $n\geq 0,$
\begin{align}\label{4.34}
\left(I+\Phi^{-1}\right)E((n+1)T)\le E(nT)
\end{align}
by considering problem \eqref{equation} with $u_0(t)=u(t+nT).$

In what follows, we will show that $E(t)$ is bounded by $S(t)=|Q(t)|,$ where $Q(t)$ is the solution of the following ordinary differential equation:
\begin{equation}\label{S(t)eq}
\begin{cases}
|Q(t)|^{\prime}+(I+\Phi)^{-1}|Q(t)|=0,\\
|Q(0)|=E(0).
\end{cases}
\end{equation}
Since $(I+\Phi)^{-1}$ has a bounded derivative on $[0,+\infty)$, we conclude from Picard-Lindel\"{o}f Theorem that problem \eqref{S(t)eq} admits a unique global solution on $\R^+.$ Since $S(t)\ge0$ for any $t\geq 0,$ which entails that $S'(t)\le0.$ Consequently, there exists $S_0\geq 0,$ such that $\lim\limits_{t\to \infty}S(t)=S_0.$ Moreover, we deduce from \eqref{S(t)eq} and $S(t)\ge S_0$ for any $t\geq 0$ that
\begin{align*}
S(t)=E(0)-\int_{0}^{t}(I+\Phi)^{-1}S(\tau)\,d\tau\le E(0)-t(I+\Phi)^{-1}S_0.
\end{align*}
Let $t\to+\infty$ in the above inequality, we conclude from $S(t)>0$ for any $t\geq 0$ that $S_0=0.$ In what follows, we will prove the following conclusion by induction: 
\begin{align*}
E(nT)\le S(n),\quad \forall\,\,\, n=0,1,2,\cdots.
\end{align*}
Assume that there exists a integer $n\geq 0,$ such that $E(nT)\le S(n).$ Since
\begin{align*}
S(n+1)=S(n)-\int_{n}^{n+1}(I+\Phi)^{-1}S(t)\,dt\ge S(n)-(I+\Phi)^{-1}S(n)
\end{align*}
and 
\begin{align*}
I-(I+\Phi)^{-1}&=(I+\Phi)\circ(I+\Phi)^{-1}-(I+\Phi)^{-1}=\Phi\circ(I+\Phi)^{-1}=\Phi\circ(\Phi^{-1}\circ\Phi+\Phi)^{-1}\\&=\Phi\circ\left((\Phi^{-1}+I)\circ\Phi\right)^{-1}=\Phi\circ\Phi^{-1}\circ(\Phi^{-1}+I)^{-1}=(I+\Phi^{-1})^{-1},
\end{align*}
we obtain
\begin{align*}
S(n+1)\ge \left(I-(I+\Phi)^{-1}\right)S(n)=(I+\Phi^{-1})^{-1}S(n),
\end{align*}
which entails that
\begin{align*}
E((n+1)T)&\le (I+\Phi^{-1})^{-1}E(nT) \\
&\le (I+\Phi^{-1})^{-1}S(n)\\
&\le S(n+1).
\end{align*}
Notice that both $E(t)$ and $S(t)$ are monotone decreasing in time, we obtain
\begin{align}\label{S}
E(t)\le S\left(\frac{t}{T}-1\right)
\end{align}
for any $t\ge T.$

\begin{enumerate}[\rm{(}1\rm{)}]
\item If $m=1,$ then there exists a positive constant $\kappa_0$ depending on $a$, $E(0)$ and $M_1,$ such that
\begin{align*}
S(t)=E(0)e^{-\kappa_0 t}
\end{align*} 
for any $t\geq 0.$ Thus, it follows from \eqref{S} that there exist two positive constants $K_1$ and $\kappa$ depending on $E(0)$ and $M_1,$ such that
\begin{align*}
E(t)\le K_1e^{-\kappa t}
\end{align*}
for any $t\geq 0.$

\item If $m>1,$ we conclude from $\lim\limits_{t\to \infty}S(t)=0$ and \eqref{S(t)eq} that there exists some time $T_0,$ such that
\begin{align*}
S^{\prime}(t)+CS(t)^{\frac{m+1}{2}}\le 0
\end{align*}
for any $t\geq T_0,$ which entails that there exists a positive constant $C$ depending on $a$, $E(0)$ and $M_1,$ such that 
\begin{align*}
S(t)\le C(2+t)^{\frac{2}{1-m}}
\end{align*}
for any $t\geq T_0.$ It follows from \eqref{S} that there exists a positive constant $K_2$ depending on $a$, $E(0)$ and $M_1,$ such that
\begin{align*}
E(t)\le K_2(1+t)^{\frac{2}{1-m}}
\end{align*}
for any $t\geq 0.$
\end{enumerate}		
\item If $\mu^{\prime}(s)+b\mu(s)^\theta\le 0,$ denote by $\Psi(s)=C_2\left(s^{\frac{2}{m+1}}+s\right)+C_3s^{\frac{\sigma}{\sigma+\theta-1}},$ where the constants $C_2,$ $C_3$ are the same as in Proposition \ref{proposition}, then we have
\begin{align*}
E(t)\le \Psi(D(t))=\Psi(E(0)-E(t)).
\end{align*} 
By carrying out the above similar arguments, we immediately obtain the following results:
\begin{enumerate}[\rm{(}1\rm{)}]
\item If $m=1,$ then there exists a positive constant $K_3$ depending on $\theta,$ $b,$ $\sigma,$ $M_1,$ $M_2$ and $E(0),$ such that 
\begin{align*}
E(t)\le K_3(1+t)^{-\frac{\sigma}{\theta-1}}
\end{align*}
for any $t\geq 0.$
\item If $m>1,$ then there exists a positive constant $K_4$ depending on $\theta,$ $b,$ $\sigma,$ $M_1,$ $M_2$ and $E(0),$ such that 
\begin{align*}
E(t)\le K_4(1+t)^{{-\min\{\frac{\sigma}{\theta-1},\frac{2}{m-1}\}}}
\end{align*}
for any $t\geq 0.$
\end{enumerate}
\end{enumerate}
\end{proof}	
\section{Blow-up}\label{blo}
In this section, we will analyze blow-up of weak solutions for problem \eqref{equation} under some different assumptions on initial data based on the ideas in \cite{Guo2017blowup,Georgiev1994existence,Sun2019blow}.
	\subsection{Blow-up of solutions with negative initial energy}
	This subsection is devoted to proving the finite-time blow-up of weak solutions for problem \eqref{equation} in the case of negative initial energy, i.e., $E(0)<0$.
\begin{theorem}\label{negative}
Assume that Assumption \ref{Assumption1} is true and $E(0)<0.$ If $p_1>\max\{\sqrt{k(0)},q_l\}$ and $p_r>m$, then the weak solution $u$ of problem \eqref{equation} blows up in finite time. More precisely, there exists some time $T_{max}\in (0,+\infty),$ such that
\begin{align*}
\varlimsup_{t\rightarrow T^-_{max}}\|\nabla u(t)\|_2=+\infty.
\end{align*}   
\end{theorem}
\begin{proof}
Let $u$ be the weak solution to problem \eqref{equation} and $T_{max}$ is the maximal lifespan of $u$ defined as in \eqref{lifespan}. If $T_{max}<+\infty,$ we infer from Lemma \ref{lem:1} that 
\begin{align}\label{46}
\varlimsup_{t\to T_{max}^-}\E(t)=+\infty.
\end{align}
It follows from the definition of $E(t)$ and \eqref{E'} that 
\begin{align}\label{47}
\int_{\Omega}F(u(t))\,dx=\E(t)-E(t)\ge \E(t)-E(0).
\end{align} 
Along with \eqref{46}-\eqref{47}, we obtain
\begin{align}\label{48}
\varlimsup_{t\to T_{max}^-}\int_{\Omega}F(u(t))\,dx=+\infty.
\end{align}
On the other hand, it follows from Sobolev inequalities that there exists a positive constant $C,$ such that
\begin{align*}
\int_{\Omega}F(u(t))\,dx=\sum_{i=1}^{r}\frac{a_i}{p_i+1}\|u(t)\|_{p_i+1}^{p_i+1}-\sum_{j=1}^{l}\frac{b_j}{q_j+1}\|u(t)\|_{q_j+1}^{q_j+1}\le C\sum_{i=1}^{r}\|\nabla u(t)\|_2^{p_i+1},
\end{align*}
which entails that
\begin{align*}
\varlimsup_{t\rightarrow T^-_{max}}\|\nabla u(t)\|_2=+\infty.
\end{align*}
In what follows, we will prove that $T_{max}<+\infty.$ To do this, we first define $H(t)=-E(t)$ and $N(t)=\frac{1}{2}\|u(t)\|^2_2,$ we would like to prove that
\begin{align*}
Y(t):=H(t)^{1-\alpha}+\epsilon N^{\prime}(t)
\end{align*}
blows up in finite time, where $\alpha\in (0,1)$ and $\epsilon>0$ will be selected later. It follows from the fact that $m<p_r\le5$ and the definition of weak solution that $u\in C([0,T];H^1_0(\Omega))\subset L^{m+1}(\Omega\times (0,T))$ for any $0<T<T_{max}.$ Thus, by replacing $\phi$ by $u$ in \eqref{weaksolution}, we obtain
\begin{align*}
N^{\prime}(t)=&\int_{\Omega}u(t)u_t(t)\,dx\\
=&\int_{\Omega}u_t(0)u(0)\,dx+\int_0^t \|u_t(\tau)\|_2^2\,d\tau
-k(0)\int_0^t\|\nabla u(\tau)\|_2^2\,d\tau\\&-\int_0^t\int_0^{+\infty}\int_{\Omega}\nabla u(\tau - s)\cdot\nabla u(\tau)k'(s)\,dxdsd\tau\\
&-\int_0^t\int_{\Omega}|u_t(\tau)|^{m - 1}u_t(\tau)u(\tau)\,dxd\tau+\int_0^t\int_{\Omega}f(u(\tau))u(\tau)\,dxd\tau\\
=&\int_{\Omega}u_t(0)u(0)\,dx+\int_0^t \|u_t(\tau)\|_2^2\,d\tau-\int_0^t\|\nabla u(\tau)\|_2^2\,d\tau\\
&-\int_0^t\int_0^{+\infty}\int_{\Omega}\nabla w^\tau(s)\cdot\nabla u(\tau)\mu(s)\,dxdsd\tau\\
&-\int_0^t\int_{\Omega}|u_t(\tau)|^{m - 1}u_t(\tau)u(\tau)\,dxd\tau+\int_0^t\int_{\Omega}f(u(\tau))u(\tau)\,dxd\tau
\end{align*} 
for any $t\in [0,T_{max}).$ It is easy to verify that $N'(t)$ is absolutely continuous, then we can differentiate $N'(t)$ to obtain that
\begin{align}\label{twoorderdif}
\nonumber N''(t)=&\|u_t(t)\|^2_2-\|\nabla u(t)\|^2_2-\int_{0}^{+\infty}\mu(s)\int_{\Omega} \nabla w^t(s)\cdot\nabla u(t)\,dxds\\
&-\int_{\Omega}|u_t(t)|^{m-1}u_t(t)u(t)\,dx+\sum_{i=1}^{r}a_i\|u(t)\|_{p_i+1}^{p_i+1}-\sum_{j=1}^{l}b_j\|u(t)\|_{q_j+1}^{q_j+1},
\end{align}
which implies that $Y(t)$ is differentiable and 
\begin{align}\label{Yprime}
Y^{\prime}(t)=(1-\alpha)H(t)^{-\alpha}H^{\prime}(t)+\epsilon N^{\prime\prime}(t).
\end{align}
In the following, we would like to find an appropriate lower bound of right-hand side of \eqref{Yprime}. By using Cauchy-Schwartz inequality and Young's inequality, we conclude that for any $\delta>0,$
\begin{align}\label{w}
\left|\int_{0}^{+\infty}\mu(s)\int_{\Omega} \nabla w^t(s)\cdot\nabla u(t)\,dxds\right|
\le\delta\int_{0}^{+\infty}\mu(s)\|\nabla w^t(s)\|_2^2\,ds+\frac{k(0)-1}{4\delta}\|\nabla u\|_2^2. 
\end{align}
We deduce from H\"{o}lder's inequality and $p_r>m$ that
\begin{align}
\left|\int_{\Omega}u|u_t|^{m-1}u_t\,dx\right|\le \int_{\Omega}|u||u_t|^m\,dx\le \|u\|_{m+1}\|u_t\|^m_{m+1}\le C\|u\|_{p_r+1}\|u_t\|^m_{m+1}.
\end{align}
Since
\begin{align}\label{G't}
H^{\prime}(t)=-E^{\prime}(t)=\|u_t(t)\|^{m+1}_{m+1}-\frac{1}{2}\int_{0}^{+\infty}\mu'(s)\|\nabla w^t(s)\|_2^2\,ds\ge 0,
\end{align}
we conclude that $H(t)$ is non-decreasing for $t\in [0,T_{max})$ and
\begin{align*}
H(t)=&-E(t)=-\E(t)+\sum_{i=1}^{r}\frac{a_i}{p_i+1}\|u(t)\|_{p_i+1}^{p_i+1}-\sum_{j=1}^{l}\frac{b_j}{q_j+1}\|u(t)\|^{q_j+1}_{q_j+1}\\
\le&\sum_{i=1}^{r}\frac{a_i}{p_i+1}\|u(t)\|_{p_i+1}^{p_i+1}\\
\le&C\left(\sum_{i=1}^{r}\frac{a_i}{p_i+1}\|u(t)\|_{p_r+1}^{p_i+1}\right)\\
\le&C\max_{1\le i\le r}\left\{\|u(t)\|^{p_i+1}_{p_r+1}\right\},
\end{align*}
which implies that there exists a positive constant $c,$ such that
\begin{align}
H(t)^{-1}\ge c\min_{1\le i\le r}\left\{\|u(t)\|_{p_r+1}^{-p_i-1}\right\}.
\end{align}
Thus, we obtain 
\begin{align}\label{2.4}
\max_{1\le i\le r}\left\{H(t)^{-\frac{1}{p_i+1}}\right\}\ge c\|u(t)\|^{-1}_{p_r+1},
\end{align}
which entails that for any $\lambda>0,$
\begin{align*}
\left|\int_{\Omega}u|u_t|^{m-1}u_t\,dx\right|\le  &C\|u\|_{p_r+1}\|u_t\|_{m+1}^m\\
\le &C\|u\|_{p_r+1}^{\frac{p_r+1}{m+1}}\|u_t\|_{m+1}^m\|u\|_{p_r+1}^{\frac{m-p_r}{m+1}}\\
\le&\left(\lambda \|u\|_{p_r+1}^{p_r+1}+C_{\lambda}\|u_t\|_{m+1}^{m+1}\right)\max_{1\le i\le r}\left\{H(t)^{\frac{m-p_r}{(p_i+1)(m+1)}}\right\}.
\end{align*}
Let 
\begin{align*}
0<\alpha<\min\left\{\frac{p_r-m}{(p_r+1)(m+1)},\frac{p_r-1}{2(p_r+1)}\right\}<\frac{1}{2},
\end{align*}
we infer from \eqref{G't} that
\begin{align}\label{ut}
\nonumber\left|\int_{\Omega}u|u_t|^{m-1}u_t\,dx\right|\le &\lambda \max_{1\le i\le r}\left\{H(0)^{\frac{m-p_r}{(p_i+1)(m+1)}}\right\}\|u\|^{p_r+1}_{p_r+1}\\
&+C_{\lambda}\max_{1\le i\le r}\left\{H(0)^{\frac{m-p_r}{(p_i+1)(m+1)}+\alpha}\right\}H(t)^{-\alpha}H^{\prime}(t).
\end{align}
We conclude from \eqref{twoorderdif}, \eqref{w} and \eqref{ut} that for any $\delta,$ $\lambda>0,$
\begin{align}\label{2.11}
\nonumber N^{\prime\prime}(t)\ge &\|u_t(t)\|_2^2-\left(\frac{k(0)-1}{4\delta}+1\right)\|\nabla u(t)\|_2^2-\delta\int_{0}^{+\infty}\mu(s)\|\nabla w^t(s)\|_2^2\,ds\\
\nonumber&+\sum_{i=1}^{r-1}a_i\|u(t)\|_{p_i+1}^{p_i+1}+\left(a_r-\lambda\max_{1\le i\le r}\left\{H(0)^{\frac{m-p_r}{(p_i+1)(m+1)}}\right\}\right)\|u(t)\|_{p_r+1}^{p_r+1}\\
&-\sum_{j=1}^{l}b_j\|u(t)\|_{q_j+1}^{q_j+1}-C_{\lambda}\max_{1\le i\le r}\left\{H(0)^{\frac{m-p_r}{(p_i+1)(m+1)}+\alpha}\right\}H(t)^{-\alpha}H^{\prime}(t)
\end{align}
for any $t\in [0,T_{max}).$

We deduce from the definition of $E(t)$ and $H(t)=-E(t)$ that
\begin{align}\label{ww}
\int_{0}^{+\infty}\mu(s)\|\nabla w^t(s)\|_2^2\,ds=&-2H(t)-\|u_t(t)\|_2^2-\|\nabla u(t)\|_2^2\nonumber\\
&+\sum_{i=1}^{r}\frac{2a_i}{p_i+1}\|u(t)\|_{p_i+1}^{p_i+1}-\sum_{j=1}^{l}\frac{2b_j}{q_j+1}\|u(t)\|_{q_j+1}^{q_j+1},
\end{align}
which entails that
\begin{align}\label{2.13}
\nonumber N^{\prime\prime}(t)\ge &(1+\delta)\|u_t(t)\|_2^2+2\delta H(t)+\left(\delta-\frac{k(0)-1}{4\delta}-1\right)\|\nabla u(t)\|_2^2\\
\nonumber&+\sum_{i=1}^{r-1}\frac{a_i(p_i+1-2\delta)}{p_i+1}\|u(t)\|_{p_i+1}^{p_i+1}-\sum_{j=1}^{l}\frac{b_j(q_j+1-2\delta)}{q_j+1}\|u(t)\|_{q_j+1}^{q_j+1}\\
\nonumber&+\left(a_r-\lambda\max_{1\le i\le r}\left\{H(0)^{\frac{m-p_r}{(p_i+1)(m+1)}}\right\}-\frac{2a_r\delta}{p_r+1}\right)\|u(t)\|_{p_r+1}^{p_r+1}\\&-C_{\lambda}\max_{1\le i\le r}\left\{H(0)^{\frac{m-p_r}{(p_i+1)(m+1)}+\alpha}\right\}H(t)^{-\alpha}H^{\prime}(t)
\end{align}
for any $t\in [0,T_{max}).$ Let
\begin{align*}
\delta=\frac{\max\{\sqrt{k(0)},q_l\}+1}{2}
\end{align*}
and let $\lambda$ be such that
\begin{align*}
\lambda\max_{1\le i\le r}\left\{H(0)^{\frac{m-p_r}{(p_i+1)(m+1)}}\right\}=\frac{a_r(p_r+1-2\delta)}{2(p_r+1)},
\end{align*}
then we conclude from the assumption $\sqrt{k(0)}<p_1$ and $q_l<p_1$ that
\begin{align*}
\delta-\frac{k(0)-1}{4\delta}-1\ge0,\,\,p_i+1-2\delta>0,\,\,q_j+1-2\delta\le 0
\end{align*}
for any $i=1,\cdots ,r$ and any $j=1,\cdots,l,$ which entails that there exists a positive constant $c_0,$ such that
\begin{align*}
N^{\prime\prime}(t)\ge c_0\left(\|u_t(t)\|_2^2+H(t)+\|u(t)\|_{p_r+1}^{p_r+1}\right)-C_{\lambda}\max_{1\le i\le r}\left\{H(0)^{\frac{m-p_r}{(p_i+1)(m+1)}+\alpha}\right\}H(t)^{-\alpha}H^{\prime}(t).
\end{align*}
Furthermore, let $\epsilon>0$ be sufficiently small, such that
\begin{align*}
\epsilon \le \min\left\{\frac{1-\alpha}{C_{\lambda}\max\limits_{1\le i\le r}\left\{H(0)^{\frac{m-p_r}{(p_i+1)(m+1)}+\alpha}\right\}},1\right\},
\end{align*}
then we conclude that there exists a positive constant $c$ depending on $\epsilon,$ $p_i,$ $q_j$ and $k(0),$ such that
\begin{align}\label{Y'}
Y^{\prime}(t)\ge c\left(\|u_t(t)\|_2^2+H(t)+\|u(t)\|_{p_r+1}^{p_r+1}\right)>0
\end{align}
for any $t\in [0,T_{max}).$  
		
If $N^{\prime}(0)<0,$ we shall impose an extra restriction on $\epsilon:$
\begin{align*}
0<\epsilon\le -\frac{H(0)^{1-\alpha}}{2N^{\prime}(0)},
\end{align*}
then we infer from inequality \eqref{Y'} that
\begin{align*}
Y(t)\ge Y(0)\ge \frac{1}{2}H(0)^{1-\alpha}>0
\end{align*} 
for $t\in [0,T_{max}).$ Since
\begin{align}\label{Y}
Y(t)^{\frac{1}{1-\alpha}}\le 2^{\frac{1}{1-\alpha}}\left(H(t)+|N^{\prime}(t)|^{\frac{1}{1-\alpha}}\right)
\end{align}
for any $t\in [0,T_{max})$ and $N^{\prime}(t)=\int_{\Omega}u(t)u_t(t)\,dx,$ it follows from the Cauchy-Schwartz inequality and Young's inequality that there exists a positive constant $C,$ such that
\begin{align}\label{2.10}
\nonumber|N^{\prime}(t)|^{\frac{1}{1-\alpha}}&\le \|u_t(t)\|_2^{\frac{1}{1-\alpha}}\|u(t)\|_2^{\frac{1}{1-\alpha}}\\
\nonumber&\le C\|u_t(t)\|_2^{\frac{1}{1-\alpha}}\|u(t)\|_{p_r+1}^{\frac{1}{1-\alpha}}\\
&\le C\left(\|u_t(t)\|_2^2+\|u(t)\|_{p_r+1}^{\frac{2}{1-2\alpha}}\right).
\end{align}
Denote by
\begin{align*}
\beta:=p_r+1-\frac{2}{1-2\alpha},
\end{align*}
and then $\beta>0$. Since $H(t)$ is non-decreasing for any $t\in [0,T_{max})$, we conclude from \eqref{2.4} that
\begin{align}\label{eq6.1}
\nonumber\|u(t)\|_{p_r+1}^{\frac{2}{1-2\alpha}}&=\|u(t)\|_{p_r+1}^{-\beta}\|u(t)\|_{p_r+1}^{p_r+1}\\
\nonumber\le& C\max_{1\le i\le r}\left\{H(t)^{-\frac{\beta}{p_i+1}}\right\}\|u(t)\|_{p_r+1}^{p_r+1}\\
\le& C\max_{1\le i\le r}\left\{H(0)^{-\frac{\beta}{p_i+1}}\right\}\|u(t)\|_{p_r+1}^{p_r+1}.
\end{align}  
Along with inequality \eqref{2.10} and inequality \eqref{eq6.1}, it yields
\begin{align*}
|N^{\prime}(t)|^{\frac{1}{1-\alpha}}\le C\left(\|u_t(t)\|_2^2+\|u(t)\|_{p_r+1}^{p_r+1}\right)
\end{align*}
for any $t\in [0,T_{max}).$ Thus, we deduce from  \eqref{Y'} and \eqref{Y} that there exists a positive constant $C_0$ depending on $p_i,$ $q_j,$ $m,$ $k(0)$ and $H(0),$ such that
\begin{align}\label{adsj}
Y(t)^{\frac{1}{1-\alpha}}\le {C_0}Y^{\prime}(t)
\end{align}
for any $t\in [0,T_{max}),$ which implies that $T_{max}$ is finite and
\begin{align*}
T_{max}<\frac{1-\alpha}{\alpha}C_0Y(0)^{\frac{\alpha}{\alpha-1}}\le \frac{1-\alpha}{\alpha}2C_0H(0)^{-\alpha}.
\end{align*}
\end{proof}
\begin{remark}\label{rmk:6.2}
If we replace the condition $E(0)<0$ in Theorem \ref{negative} with $E(t)<0$ for some $t\in [0,T_{\text{max}})$, the conclusion of Theorem \ref{negative} still holds.
\end{remark}
\subsection{Blow-up of solutions with lower nonnegative initial energy}
The main objective of this subsection is to establish a finite-time blow-up result for problem \eqref{equation} in the case that the initial total energy $E(0)$ is nonnegative.
	
Denote by
\begin{align*}
\ell_0:=\max\{q_l,\sqrt{k(0)}\},
\end{align*}
then it is easy to prove that problem
\begin{align}\label{y*}
\sum_{i=1}^r\frac{a_i(p_1+1)}{p_i+1}\left(2\gamma_i^2\right)^{\frac{p_i+1}{2}}y^{\frac{p_i-1}{2}}=\ell_0+1
\end{align}
admits a unique solution $y_*$ on $\R^+$ and it follows from the assumption $k(0)>1$ that
\begin{align*}
\sum_{i=1}^{r}a_i\left(2\gamma_i^2\right)^{\frac{p_i+1}{2}}{y_*}^{\frac{p_i-1}{2}}\ge \ell_0+1>2.
\end{align*}
Since the function $y\mapsto\sum\limits_{i=1}^{r}a_i\left(2\gamma_i^2\right)^{\frac{p_i+1}{2}}{y}^{\frac{p_i-1}{2}}$ is monotone increasing on $\R^+,$ we deduce from \eqref{y0} that  
\begin{align}\label{y>}
y_*>y_0>0.
\end{align} 
Define
\begin{align*}
M=G(y_*)=y_*-\sum_{i=1}^{r}\frac{a_i}{p_i+1}\left(2\gamma_i^2y_*\right)^{\frac{p_i+1}{2}},
\end{align*}
then we have
\begin{align*}
M=\frac{(p_1-\ell_0)y_*}{p_1+1}.
\end{align*}
\begin{theorem}\label{theorem2}
Assume that Assumption \ref{Assumption1} holds, $p_1>\ell_0,$ $p_r>m,$ $\E(0)>y_0$ and $0\le E(0)<M.$ Then the weak solution $u$ of problem \eqref{equation} blows up in finite time. More precisely, there exists some time $T_{max}\in (0,+\infty),$ such that 
\begin{align*}
\varlimsup_{t\rightarrow T^-_{max}}\|\nabla u(t)\|_2=+\infty.
\end{align*}  
\end{theorem}
\begin{proof}
Thanks to
\begin{align*}
E(t)=\E(t)-\int_{\Omega}F(u(t))u(t)\,dx
\end{align*}
and
\begin{align*}
G(y)=y-\sum_{i=1}^{r}\frac{a_i}{p_i+1}(2\gamma_iy)^{\frac{p_i+1}{2}},
\end{align*}
we obtain from \eqref{quadraticenergyfunctional} that
\begin{align}\label{15}
\nonumber E(t)=&\mathscr{E}(t)-\sum_{i=1}^{r}\frac{a_i}{p_i+1}\|u(t)\|_{p_i+1}^{p_i+1}+\sum_{j=1}^{l}\frac{b_j}{q_j+1}\|u(t)\|_{q_j+1}^{q_j+1}\\
\nonumber\ge& \E(t)-\sum_{i=1}^{r}\frac{a_i\gamma_i^{p_i+1}}{p_i+1}\|\nabla u(t)\|_2^{p_i+1}\\
\ge& \E(t)-\sum_{i=1}^{r}\frac{a_i}{p_i+1}\left(2\gamma_i^2\E(t)\right)^{\frac{p_i+1}{2}}=G(\E(t))
\end{align}
for any $t\in[0,T_{max})$, where the constants $\gamma_i$ are defined as in \eqref{gamma}. Since the function $G(y)$ attains its maximum value at $y=y_0$ and $G^{\prime}(y)<0$ for any $y>y_0,$ then there exists a unique $y_1>y_*,$ such that $G(y_1)=E(0)<M=G(y_*).$ Therefore, we deduce from \eqref{E'} and \eqref{15} that 
\begin{align}\label{y1e}
M>G(y_1)=E(0)\ge E(t)\ge G(\E(t))
\end{align}
for any $t\in [0,T_{max}).$ In particular, we obtain
\begin{align*}
G(y_1)\ge G(\E(0)).
\end{align*}
Thus, we conclude from the assumption $\E(0)>y_0$ that 
\begin{align*}
\E(0)\ge y_1.
\end{align*}
Since $\E(t)$ is continuous in $t$ and $\E(0)\ge y_1,$ it follows from inequality \eqref{y1e} that
\begin{align}\label{>M}
\E(t)\ge y_1>y_*>G(y_*)=M
\end{align}
for any $t\in [0,T_{max}).$ Consequently, we obtain
\begin{align}\label{pqdayu}
\int_{\Omega}F(u(t))\,dx=\E(t)-E(t)\ge y_1-G(y_1)=\sum_{i=1}^r\frac{a_i}{p_i+1}\left(2\gamma_i^2y_1\right)^{\frac{p_i+1}{2}}.
\end{align}
For any $t\in [0,T_{max}),$ denote by
\begin{align*}
\G(t)=M-E(t)
\end{align*}
and
\begin{align*}
N(t)=\frac{1}{2}\|u(t)\|^2_2.
\end{align*}		
 Following the same strategy as in Theorem \ref{negative}, we only need to show that
\begin{align*}
\Y(t)=\G(t)^{1-\alpha}+\epsilon N^{\prime}(t)
\end{align*}
blows up in finite time. We first deduce from inequality \eqref{>M} and Sobolev inequality that
\begin{align*}
\G(t)=&M-E(t)=M-\E(t)+\sum_{i=1}^{r}\frac{a_i}{p_i+1}\|u(t)\|_{p_i+1}^{p_i+1}-\sum_{j=1}^{l}\frac{b_j}{q_j+1}\|u(t)\|_{q_j+1}^{q_j+1}\\
\le&\sum_{i=1}^{r}\frac{a_i}{p_i+1}\|u(t)\|_{p_i+1}^{p_i+1}\le C\sum_{i=1}^r\|u(t)\|_{p_r+1}^{p_i+1}\le C\max_{1\le i\le r}\left\{\|u(t)\|^{p_i+1}_{p_r+1}\right\},
\end{align*}
which implies that there exists a positive constant $c,$ such that
\begin{align}\label{2.5}
\max_{1\le i\le r}\left\{\G(t)^{-\frac{1}{p_i+1}}\right\}\ge c\|u(t)\|^{-1}_{p_r+1}.
\end{align}
Let
\begin{align*}
0<\alpha<\min\left\{\frac{p_r-m}{(p_r+1)(m+1)},\frac{p_r-1}{2(p_r+1)}\right\}<\frac{1}{2},
\end{align*}
by carrying out the similar proof of inequality \eqref{2.13}, we obtain
\begin{align}\label{2.14}
\nonumber N^{\prime\prime}(t)\ge &(1+\delta)\|u_t(t)\|_2^2-2\delta E(t)+\left(\delta-\frac{k(0)-1}{4\delta}-1\right)\|\nabla u(t)\|_2^2\\
\nonumber&+\sum_{i=1}^{r-1}\frac{a_i(p_i+1-2\delta)}{p_i+1}\|u(t)\|_{p_i+1}^{p_i+1}-\sum_{j=1}^{l}\frac{b_j(q_j+1-2\delta)}{q_j+1}\|u(t)\|_{q_j+1}^{q_j+1}\\
\nonumber&+\left(a_r-\lambda\max_{1\le i\le r}\left\{\G(0)^{\frac{m-p_r}{(p_i+1)(m+1)}}\right\}-\frac{2a_r\delta}{p_r+1}\right)\|u(t)\|_{p_r+1}^{p_r+1}\\
&-C_{\lambda}\max_{1\le i\le r}\left\{\G(0)^{\frac{m-p_r}{(p_i+1)(m+1)}+\alpha}\right\}\G(t)^{-\alpha}\G^{\prime}(t)
\end{align}
for any $t\in [0,T_{max}).$ Let
\begin{align*}
\delta=\frac{\ell_0+1}{2}
\end{align*}
and let $\lambda$ be such that
\begin{align*}
\lambda\max_{1\le i\le r}\left\{\G(0)^{\frac{m-p_r}{(p_i+1)(m+1)}}\right\}=\frac{a_r(p_r+1-2\delta)}{2(p_r+1)},
\end{align*}
then we have
\begin{align*}
\delta-\frac{k(0)-1}{4\delta}-1\ge0,\,\,\, q_j+1-2\delta\le0,\,\,j=1,2,\cdots,l,
\end{align*}
which implies that
\begin{align}\label{525}
\nonumber N^{\prime\prime}(t)\ge& \left(\frac{l_0+3}{2}\right)\|u_t(t)\|_2^2-\left(l_0+1\right) E(t)+\sum_{i=1}^{r}\frac{a_i(p_i-l_0)}{p_i+1}\|u(t)\|_{p_i+1}^{p_i+1}\\
\nonumber&-\lambda\max_{1\le i\le r}\left\{\G(0)^{\frac{m-p_r}{(p_i+1)(m+1)}}\right\}\|u(t)\|_{p_r+1}^{p_r+1}\\&-C_{\lambda}\max_{1\le i\le r}\left\{\G(0)^{\frac{m-p_r}{(p_i+1)(m+1)}+\alpha}\right\}\G(t)^{-\alpha}\G^{\prime}(t).
\end{align}
It follows from inequality \eqref{pqdayu} and the definition of $F(u)$ that
\begin{align}\label{526}
\sum_{i=1}^{r}\frac{a_i(p_i-\ell_0)}{p_i+1}\|u(t)\|_{p_i+1}^{p_i+1}\ge(p_1-\ell_0) \int_{\Omega}F(u(t))\,dx\ge \sum_{i=1}^{r}\frac{a_i(p_1-\ell_0)}{p_i+1}\left(2\gamma_i^2y_1\right)^{\frac{p_i+1}{2}}.
\end{align}
We infer from $y_1>y_*,$ $p_1>\ell_0$ and equality \eqref{y*} that
\begin{align}\label{527}
\nonumber M=G(y_*)=&y_*-\sum_{i=1}^{r}\frac{a_i}{p_i+1}\left(2\gamma_i^2y_*\right)^{\frac{p_i+1}{2}}\\
\nonumber=&\frac{p_1-\ell_0}{\ell_0+1}\sum_{i=1}^{r}\frac{a_i}{p_i+1}\left(2\gamma_i^2y_*\right)^{\frac{p_i+1}{2}}\\
<&\frac{1}{\ell_0+1}\sum_{i=1}^{r}\frac{a_i(p_1-\ell_0)}{p_i+1}\left(2\gamma_i^2y_1\right)^{\frac{p_i+1}{2}}.
\end{align} 
Thus, it follows from inequalities \eqref{526} and \eqref{527} that there exists a constant $\theta\in (0,1),$ such that
\begin{align}\label{529}
\left(\ell_0+1\right)E(t)\le (\ell_0+1)E(0)<(\ell_0+1)M=\theta \sum_{i=1}^{r}\frac{a_i(p_i-\ell_0)}{p_i+1}\|u(t)\|_{p_i+1}^{p_i+1}.
\end{align}
Let $\lambda>0$ be a constant such that
\begin{align}\label{lll}
\lambda\max_{1\le i\le r}\left\{\G(0)^{\frac{m-p_r}{(p_i+1)(m+1)}}\right\}=\frac{(1-\theta)a_r(p_r-\ell_0)}{2(p_r+1)}
\end{align}
and let $\epsilon>0$ be a sufficiently small constant, such that 
\begin{align}\label{aaa}
\epsilon \le \min\left\{\frac{1-\alpha}{\max_{1\le i\le r}\left\{\G(0)^{\frac{m-p_r}{(p_i+1)(m+1)}+\alpha}\right\}},1\right\},
\end{align} 
then we conclude from \eqref{526} and \eqref{529}-\eqref{aaa} that there exists a positive constant $c,$ such that
\begin{align}\label{last}
\Y^{\prime}(t)=(1-\alpha)\G(t)^{-\alpha}\G^{\prime}(t)+\epsilon N^{\prime\prime}(t)\ge c\left(\|u_t(t)\|_2^2+\|u(t)\|_{p_r+1}^{p_r+1}\right).
\end{align}
Employing \eqref{2.5} and the monotone increasing property of $\G(t),$ it yields
\begin{align}\label{kkk}
\|u(t)\|_{p_r+1}^{p_r+1}\ge c\min_{1\le i\le r}\left\{\G(t)^{\frac{p_r+1}{p_i+1}}\right\}\ge c\min_{1\le i\le r}\left\{\G(0)^{\frac{p_r-p_i}{p_i+1}}\right\}\G(t).
\end{align}
Along with inequalities	\eqref{last}-\eqref{kkk}, we conclude that there exists a positive constant $c,$ such that
\begin{align*}
\Y^{\prime}(t)\ge c\left(\|u_t(t)\|_2^2+\|u(t)\|_{p_r+1}^{p_r+1}+\G(t)\right).
\end{align*}
If $N^{\prime}(0)<0,$ we shall impose an extra restriction on $\epsilon:$
\begin{align*}
0<\epsilon\le -\frac{\G(0)^{1-\alpha}}{2N^{\prime}(0)}.
\end{align*}
By carrying out the similar proof of Theorem \ref{negative}, we can complete the proof of Theorem \ref{theorem2}.
\end{proof}
\begin{corollary}\label{cor:5.3}
Assume that Assumption \ref{Assumption1} holds, $0\le E(0)<M,$ $p_1>\ell_0$ and $p_r>m.$ If $u_0\in \W_2$, where $\W_2$ is defined as in \eqref{W_2}, then the weak solution of problem \eqref{equation} will blows up in finite time.
\end{corollary}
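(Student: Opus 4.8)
The plan is to reduce Corollary \ref{cor:5.3} to Theorem \ref{theorem2}, in exactly the way Corollary \ref{cor:1} was reduced to Theorem \ref{globalex}. All the hypotheses collected in Corollary \ref{cor:5.3} --- Assumption \ref{Assumption1}, $0\le E(0)<M$, $p_1>\sqrt{k(0)}$, $p_1>q_s$ and $p_r>m$ --- already appear among the assumptions of Theorem \ref{theorem2}; the only item of Theorem \ref{theorem2} not listed there is $\E(0)>y_0$. So the entire task is to prove the implication
\begin{equation*}
u_0\in\W_2\quad\Longrightarrow\quad\E(0)>y_0,
\end{equation*}
after which Theorem \ref{theorem2} applies verbatim and delivers $\limsup_{t\to T_{max}^-}\|\nabla u(t)\|_2=\infty$ with $T_{max}<\infty$.

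To establish this implication I would first unwind the condition $I_0(u_0)<0$ that is built into the definition \eqref{W_2} of $\W_2$. Since $p_1>q_s$ forces $s_0=s$, and since $w(0,s)=u_0(0)-u_0(-s)$, this condition reads
\begin{equation*}
\|\nabla u_0(0)\|_2^2+\int_0^{\infty}\mu(s)\|\nabla u_0(0)-\nabla u_0(-s)\|_2^2\,ds+\sum_{j=1}^{s}b_j\|u_0(0)\|_{q_j+1}^{q_j+1}<\sum_{i=1}^{r}a_i\|u_0(0)\|_{p_i+1}^{p_i+1}.
\end{equation*}
Set $X:=\|\nabla u_0(0)\|_2^2+\int_0^{\infty}\mu(s)\|\nabla u_0(0)-\nabla u_0(-s)\|_2^2\,ds$; by \eqref{quadraticenergyfunctional} one has $X=2\E(0)-\|u_t(0)\|_2^2\le 2\E(0)$ and also $\|\nabla u_0(0)\|_2^2\le X$. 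Note $X>0$, for otherwise $u_0(0)=0$ in $H_0^1(\Omega)$ and then $I_0(u_0)=\int_0^{\infty}\mu(s)\|\nabla u_0(-s)\|_2^2\,ds\ge0$, contradicting $u_0\in\W_2$. Discarding the nonnegative sink term and estimating the source terms by the Sobolev inequality \eqref{gamma} together with $\|\nabla u_0(0)\|_2^2\le X$ gives $X<\sum_{i=1}^{r}a_i\gamma_i^{p_i+1}X^{(p_i+1)/2}$, hence, dividing by $X>0$,
\begin{equation*}
1<\sum_{i=1}^{r}a_i\gamma_i^{p_i+1}X^{\frac{p_i-1}{2}}.
\end{equation*}

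The final step is the comparison with the normalization of $y_0$. I would rewrite \eqref{y0} in the equivalent form $\sum_{i=1}^{r}a_i\gamma_i^{p_i+1}(2y_0)^{(p_i-1)/2}=1$, so that the previous display becomes $\sum_i a_i\gamma_i^{p_i+1}X^{(p_i-1)/2}>\sum_i a_i\gamma_i^{p_i+1}(2y_0)^{(p_i-1)/2}$. Since every exponent $\tfrac{p_i-1}{2}$ is strictly positive ($p_i>1$), the map $z\mapsto\sum_i a_i\gamma_i^{p_i+1}z^{(p_i-1)/2}$ is strictly increasing on $[0,\infty)$, so $X>2y_0$, and therefore $\E(0)\ge X/2>y_0$. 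This supplies precisely the missing hypothesis, and Theorem \ref{theorem2} then yields the claimed finite-time blow-up.

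I do not expect a genuine obstacle here: the argument is essentially bookkeeping with the Sobolev constants $\gamma_i$ already set up in Section \ref{potentialwellsec}, and parallels the reduction in Corollary \ref{cor:1}. The two points that deserve a moment's care are verifying $X>0$ (needed both to divide the source-term estimate by $X$ and to invoke strict monotonicity) and correctly recasting the defining relation \eqref{y0} of $y_0$ so that the monotonicity comparison applies cleanly; everything else is routine.
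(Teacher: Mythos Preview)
Your proposal is correct and follows essentially the same route as the paper: from $u_0\in\W_2$ you extract a strict inequality that, combined with the Sobolev constants $\gamma_i$ and the normalization \eqref{y0} of $y_0$, forces $\E(0)>y_0$, after which Theorem \ref{theorem2} applies. The only cosmetic difference is that the paper drops the memory integral at the outset and works directly with $\|\nabla u_0(0)\|_2^2$ in place of your $X$, reaching $2y_0<\|\nabla u_0(0)\|_2^2\le 2\E(0)$ in one line; your version with $X$ is equally valid but slightly longer.
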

\begin{proof}
Since $u_0\in \W_2$, we have
\begin{align*}
\|\nabla u_0(0)\|^2_2<\sum_{i=1}^{r}a_i\|u_0(0)\|_{p_i+1}^{p_i+1}\le \sum_{i=1}^ra_i\gamma_i^{p_i+1}\|\nabla u_0(0)\|_2^{p_i+1},
\end{align*}
which implies that
\begin{align*}
2y_0<\|\nabla u_0(0)\|_2^2\le 2\E(0).
\end{align*}
Thus, we conclude from Theorem \ref{theorem2} that the weak solution of problem \eqref{equation} will blows up in finite time. 
\end{proof}
\begin{remark}
We conclude from Remark \ref{rmk:44} that Corollary \ref{cor:5.3} is still true, if we replace $0 \le E(0) < M$ by the assumptions that $0 \le E(0) < d$ and $E(t) < M$ for some time $t \in [0, T_{\max}).$ 
\end{remark}
\begin{remark}
Recall the definition of another potential well $\mathcal{V}$ given in Remark \ref{rmk:3.9}, then Corollary \ref{cor:5.3} holds if the condition $u_0\in \W_2$ is replaced by the assumption 
\begin{align*}
u_0(0)\in \mathcal{V}_2:=\left\{u\in \mathcal{V}:\|\nabla u\|_2^2+\sum_{j=1}^{l_0}b_j\|u\|_{q_j+1}^{q_j+1}<\sum_{i=1}^{r}a_i\|u\|_{p_i+1}^{p_i+1}\right\}.
\end{align*}
\end{remark}
\subsection{Blow-up of solutions with arbitrary initial energy}
In this subsection, we will prove there is no solution of problem \eqref{equation} with arbitrary initial energy defined on $[0,+\infty).$
\begin{theorem}\label{thm:6.6}
Assume that Assumption \ref{Assumption1} holds, $p_1>\ell_0$ and $p_r>m.$ If there exists a positive constant $\eta$ defined as in \eqref{eq:rho0}, such that
\begin{align}\label{eq:assbl}
	0\le E(0)<\eta\int_{\Omega}u_0(0)\partial_tu_0(0)\,dx.
\end{align}
Then the weak solution of problem \eqref{equation} will blows up in finite time. More precisely, there exists some time $T_{max}\in (0,+\infty),$ such that
\begin{align*}
	\varlimsup_{t\rightarrow T^-_{max}}\|\nabla u(t)\|_2=+\infty.
\end{align*}   
\end{theorem}
\begin{proof}
We suppose by contradiction that $u$ is a global solution of problem \eqref{equation}, then we conclude from Theorem \ref{negative} that $E(t)\geq 0$ for any $t\geq 0.$ Denote by 
\begin{align*}
H(t)=-E(t),\,\,\,N(t)=\frac{1}{2}\|u(t)\|_2^2
\end{align*}
and define
\begin{align*}
Z(t):=H(t)+\eta N'(t),
\end{align*}
then it follows from \eqref{twoorderdif}, \eqref{w} and \eqref{ww} that
\begin{align}\label{eq:6.36}
	N^{\prime\prime}(t)\ge& 2\delta H(t)+(1+\delta)\|u_t(t)\|_2^2+\left(\delta-\frac{k(0)-1}{4\delta}-1\right)\|\nabla u(t)\|_2^2-\int_{\Omega}|u_t(t)|^{m-1}u_t(t)u(t)\,dx\nonumber\\
	&+\sum_{i=1}^{r}\frac{a_i(p_i+1-2\delta)}{p_i+1}\|u(t)\|_{p_i+1}^{p_i+1}-\sum_{j=1}^{l}\frac{b_j(q_j+1-2\delta)}{q_j+1}\|u(t)\|_{q_j+1}^{q_j+1}.
\end{align}
Moreover, we deduce from Young's inequality and H\"{o}lder's inequality that
\begin{align*}
	&\left|\int_{\Omega}u|u_t|^{m-1}u_t\,dx\right|\\\le& \|u\|_{m+1}\|u_t\|^m_{m+1}\\
	\le& \frac{m}{m+1}\rho^{\frac{m+1}{m}}\|u_t\|^{m+1}_{m+1}+\frac{1}{m+1}\left(\frac{1}{\rho}\right)^{m+1}\|u\|_{m+1}^{m+1}\\
	\le &\frac{m}{m+1}\rho^{\frac{m+1}{m}}\|u_t\|^{m+1}_{m+1}+\frac{p_r-m}{(p_r-1)(m+1)}\left(\frac{1}{\rho}\right)^{m+1}\|u\|_2^2+\frac{m-1}{(p_r-1)(m+1)}\left(\frac{1}{\rho}\right)^{m+1}\|u\|_{p_r+1}^{p_r+1}
\end{align*}
for any $\rho>0.$ Thus, we conclude from the above inequality and inequality \eqref{eq:6.36} that
\begin{align}\label{eq:6.37}
	N^{\prime\prime}(t)\ge& 2\delta H(t)+(1+\delta)\|u_t(t)\|_2^2+\left(\delta -\frac{k(0)-1}{4\delta}-1\right)\|\nabla u(t)\|_2^2\nonumber\\
	&-\frac{m}{m+1}\rho^{\frac{m+1}{m}}\|u_t(t)\|_{m+1}^{m+1}-\frac{p_r-m}{(p_r-1)(m+1)}\left(\frac{1}{\rho}\right)^{m+1}\|u(t)\|_2^2\nonumber\\
	&+\sum_{i=1}^{r-1}\frac{a_i(p_i+1-2\delta)}{p_i+1}\|u(t)\|_{p_i+1}^{p_i+1}-\sum_{j=1}^{l}\frac{b_j(q_j+1-2\delta)}{q_j+1}\|u(t)\|_{q_j+1}^{q_j+1}\nonumber\\
	&+\left(\frac{a_r(p_r+1-2\delta)}{p_r+1}-\frac{m-1}{(p_r-1)(m+1)}\left(\frac{1}{\rho}\right)^{m+1}\right)\|u(t)\|_{p_r+1}^{p_r+1}.
\end{align}
Choosing 
\begin{align*}
\delta=\frac{\ell_0+p_1+2}{4},
\end{align*}
then we have
\begin{align}\label{eq:6.40}
	\frac{a_i(p_i+1-2\delta)}{p_i+1}>0,\quad \frac{b_j(q_j+1-2\delta)}{q_j+1}<0\quad\text{and}\quad \delta-\frac{k(0)-1}{4\delta}-1>0
\end{align}
for each $i=1,\cdots, r$ and $j=1,\cdots,l$. Moreover, if we choose 
\begin{align*}
	\rho_0:=\left(\max\left\{\frac{(p_r+1)(m-1)}{a_r(p_r+1-2\delta)(p_r-1)(m+1)},\frac{2\delta\gamma_0^2(p_r-m)}{(4\delta^2-4\delta-k(0)+1)(p_r-1)(m+1)}\right\}\right)^{\frac{1}{m+1}}
\end{align*}
with
\begin{align*}
	\gamma_0=\sup\{\|v\|_{2}:v\in H^1_0(\Omega),\|\nabla v\|_2=1\},
\end{align*}
then we have
\begin{align}\label{eq:6.38}
	&\left(\delta-\frac{k(0)-1}{4\delta}-1\right)\|\nabla u(t)\|_2^2-\frac{p_r-m}{(p_r-1)(m+1)}\left(\frac{1}{\rho_0}\right)^{m+1}\|u(t)\|_2^2\nonumber\\
	\ge&\frac{1}{2\gamma_0^2}\left(\delta-\frac{k(0)-1}{4\delta}-1\right)\|u(t)\|_2^2
\end{align}
and 
\begin{align}\label{eq:6.39}
	\frac{a_r(p_r+1-2\delta)}{p_r+1}-\frac{m-1}{(p_r-1)(m+1)}\left(\frac{1}{\rho_0}\right)^{m+1}\ge0.
\end{align}
Let
\begin{align}\label{eq:rho0}
0<\eta\leq\frac{m+1}{m}\rho_0^{-\frac{m+1}{m}}
\end{align}
be a sufficiently small constant, such that
\begin{align*}
\eta\delta\le \min\left\{1+\delta,\frac{1}{2\gamma_0^2}\left(\delta-\frac{k(0)-1}{4\delta}-1\right)\right\}
\end{align*}
and let $\zeta>0$ be a constant with the following properties
\begin{align*}
	\zeta\geq \eta\delta,\quad \zeta\le 1+\delta\quad \text{and}\quad \zeta\le \frac{1}{2\gamma_0^2}\left(\delta-\frac{k(0)-1}{4\delta}-1\right),
\end{align*}
we conclude from \eqref{E'}, \eqref{eq:6.37}-\eqref{eq:6.39}, $\mu'(s)<0$ and $H(t)=-E(t)\le0$ for any $t\geq 0$ that
\begin{align}\label{eq:Z'}
\nonumber\frac{d}{dt}Z(t)\ge& 2\eta\delta H(t)+\eta(1+\delta)\|u_t(t)\|_2^2+\frac{\eta}{2\gamma_0^2}\left(\delta-\frac{k(0)-1}{4\delta}-1\right)\|u(t)\|_2^2\\
\nonumber\ge& \zeta (2H(t)+\eta\|u_t(t)\|_2^2+\eta\|u(t)\|_2^2)\\
\ge& 2\zeta Z(t).
\end{align}
Hence, we infer from the classical Gronwall inequality and inequality \eqref{eq:Z'} that
\begin{align}\label{eq:6.43}
	Z(t)\ge e^{\zeta t}Z(0)
\end{align}
for any $t\geq 0.$

On the other hand, we deduce from H\"{o}lder's inequality and equality \eqref{Eenergyidentity} that
\begin{align}\label{eq:6.42}
	\int_{0}^{t}Z(\tau)\,d\tau&=\int_{0}^{t}H(\tau)\,d\tau+\frac{\eta}{2} \|u(t)\|_2^2-\frac{\eta}{2}\|u_0(0)\|_2^2\nonumber\\
	&
	\le \int_{0}^{t}H(\tau)\,d\tau+\eta\|u_0(0)\|_2^2+\eta\left(\int_{0}^{t}\|u_t(\tau)\|_2\,d\tau\right)^2-\frac{\eta}{2}\|u_0(0)\|_2^2\nonumber\\
	&\le \int_{0}^{t}H(\tau)\,d\tau+\frac{\eta}{2}\|u_0(0)\|_2^2+\eta|\Omega|^{\frac{m-1}{m+1}}t^{\frac{2m}{m+1}}\left(\int_{0}^{t}\|u_t(\tau)\|_{m+1}^{m+1}\,d\tau\right)^{\frac{2}{m+1}}\nonumber\\&
	\le \int_{0}^{t}H(\tau)\,d\tau+\frac{\eta}{2}\|u_0(0)\|_2^2+\eta|\Omega|^{\frac{m-1}{m+1}}t^{\frac{2m}{m+1}}\left(E(0)-E(t)\right)^{\frac{2}{m+1}}.
\end{align}
It follows from $H(t)=-E(t)\le 0$ for any $t\geq 0$ and inequalities \eqref{eq:6.43}-\eqref{eq:6.42} that
\begin{align}\label{eq:6.44}
	\frac{1}{\zeta}\left(e^{\zeta t}-1\right)Z(0)\le\int_{0}^{t}Z(\tau)\,d\tau\le \frac{\eta}{2}\|u_0(0)\|_2^2+\eta|\Omega|^{\frac{m-1}{m+1}}t^{\frac{2m}{m+1}}\left(E(0)\right)^{\frac{2}{m+1}}
\end{align}
for all $t\geq 0.$ Since $Z(0)>0,$ we obtain a contradiction. Thus, the solution of problem \eqref{equation} will blow up in a finite time. 
\end{proof}
In what follows, we will prove the existence of the finite time blow-up solutions of problem \eqref{equation} with arbitrarily high initial energy. 
\begin{theorem}
Assume that Assumption \ref{Assumption1} holds, $p_1>\ell_0$ and $p_r>m.$ For any $R\in \R,$ there exists initial data $u_0$ satisfying Assumption \ref{Assumption1}, such that $E(0)=R$ and the weak solution $u(t)$ of problem \eqref{equation} with initial data $u_0$ blows up in a finite time.
\end{theorem}
\begin{proof}
\begin{enumerate}[\rm{(}1\rm{)}]
\item If $R<0,$ then for any fixed $v \in L^2_{\mu}(\mathbb{R}^-;H^1_0(\Omega))$ with $\partial_t v \in L^2_{\mu}(\mathbb{R}^-;L^2(\Omega)),$ such that $v,$ $\partial_t v$ are weakly continuous at $t=0$ and $\|\nabla v(0)\|_2 > 0,$ there exists a $\lambda_1=\lambda_1(R,v)>0,$ such that
\begin{align*}
	&\frac{1}{2}\left(\|\lambda\partial_tv(0)\|_2^2+\|\lambda\nabla v(0)\|_2^2+\int_{0}^{+\infty}\mu(s)\|\lambda\nabla \left(v(0)-v(-s)\right)\|_2^2\,ds\right)\\
	&-\sum_{i=1}^{r}\frac{a_i}{p_i+1}\|\lambda v(0)\|_{p_i+1}^{p_i+1}+\sum_{j=1}^{l}\frac{b_j}{q_j+1}\|\lambda v(0)\|_{q_j+1}^{q_j+1}=R
\end{align*}
Let $u_0=\lambda_1v,$ then we have $E(0)=R<0.$ Thus, we conclude from Theorem \ref{negative} that the weak solution of problem \eqref{equation} with initial data $u_0$ blows up in a finite time.

\item If $R\ge0.$ Thanks to Theorem \ref{thm:6.6}, it suffice to prove that there exists a function $u_0$ satisfy Assumption \ref{Assumption1} and
\begin{align*}
	R=E(0)<\eta\int_{\Omega}u_0(0)\partial_t u_0(0)\,dx,
\end{align*}
where $\eta$ is the same as in Theorem \ref{thm:6.6}.

Let $\Omega_1,$ $\Omega_2\subset\Omega$ be two disjoint domains. Denote by $\xi\in C^{\infty}(\R^-)$ a smooth function with the following properties
\begin{align*}
\xi(0)>0,\,\,\,\xi'(0)>0,\,\,\,\xi(t)=0,\quad \text{for}\ t\le -1
\end{align*}
and let $\varphi\in C_c^{\infty}(\Omega)$ be a nonzero function with the support of $\varphi$ satisfying $\spt\varphi\subset\Omega_1.$ Then for any $\lambda>0$, the functions $\varphi_{\lambda}(x,t)=\lambda \xi(t)\varphi(x)\in L_{\mu}^2(\mathbb{R}^-;H^1_0(\Omega))$ and $\partial_t\varphi_{\lambda}(x,t)=\lambda \xi'(t)\varphi(x)\in L_{\mu}^2(\mathbb{R}^-;L^2(\Omega))$ are weakly continuous at $t=0.$ Moreover, it is easy to prove that there exists a constant $\lambda_1>0,$ such that for any $\lambda>\lambda_1$, 
\begin{align}\label{eq:6.47}
	\int_{\Omega}\varphi_{\lambda}(x,0)\partial_t\varphi_\lambda(x,0)\,dx=\lambda^2\xi(0)\xi'(0)\|\varphi\|_2^2>\frac{R}{\eta}.
\end{align}
Define a functional $\mathcal{L}$ on $H^1_0(\Omega)$ by
\begin{align*}
	\mathcal{L}(\phi):=&\frac{1}{2}|\xi'(0)|^2\|\phi\|_2^2+\frac{1}{2}\left(|\xi(0)|^2+\int_{0}^{+\infty}\mu(s)|\xi(0)-\xi(-s)|^2\,ds\right)\|\nabla \phi\|_2^2\\
	&-\sum_{i=1}^{r}\frac{a_i|\xi(0)|^{p_i+1}}{p_i+1}\|\phi\|_{p_i+1}^{p_i+1}+\sum_{j=1}^{l}\frac{b_j|\xi(0)|^{q_j+1}}{q_j+1}\|\phi\|_{q_j+1}^{q_j+1},\,\,\,\forall\,\,\,\phi\in H_0^1(\Omega),
\end{align*}
then there exists $\lambda_2\ge\lambda_1,$ such that
\begin{align*}
	\mathcal{L}(\lambda\varphi)<R-1
\end{align*}
for any $\lambda>\lambda_2.$

On the other hand, let $\{e_k\}_{k=1}^{\infty}\subset H_0^1(\Omega_2)$ be an orthonormal basis, we deduce from Sobolev embedding theorem that
\begin{align*}
e_k\rightharpoonup 0\quad\text{in}\ H^1_0(\Omega_2)\end{align*}
and
\begin{align*}
e_k\to 0\quad \text{in}\ L^{p_i+1}(\Omega_2)
\end{align*}
for any $i=1,$ $\cdots,$ $r.$ Thus, there exists a constant $R_0>0$ and an integer $N\in \mathbb{N},$ such that
\begin{align*}
	&\frac{1}{2}\left(|\xi(0)|^2+\int_{0}^{+\infty}\mu(s)|\xi(0)-\xi(-s)|^2\,ds\right)R_0^2>R+1-\mathcal{L}(\lambda_2 \varphi),\\
	&\sum_{i=1}^{r}\frac{a_i|\xi(0)|^{p_i+1}R_0^{p_i+1}}{p_i+1}\|e_N\|_{L^{p_i+1}(\Omega_2)}^{p_i+1}<\frac{1}{2},
\end{align*}
which implies that
\begin{align*}
\mathcal{L}(R_0\tilde{e}_N+\lambda_2\varphi)=\mathcal{L}(R_0\tilde{e}_N)+\mathcal{L}(\lambda_2\varphi)>R+\frac{1}{2},
\end{align*}
where 
\begin{equation*}
\tilde{e}_N=
\begin{cases}
e_N,\,\,\,x\in \Omega_2,\\
0,\,\,\,\,x\in \Omega\setminus\Omega_2.
\end{cases}
\end{equation*}
Since $\mathcal{L}(\lambda\varphi)$ is continuous in $\lambda\in [0,+\infty)$ and $\mathcal{L}(\lambda\varphi)\to -\infty$ as $\lambda\to +\infty$, there exists $\bar\lambda>\lambda_2,$ such that
\begin{align}\label{eq:6.48}
\mathcal{L}(R_0\tilde{e}_N+\bar\lambda\varphi)=	\mathcal{L}(R_0\tilde{e}_N)+\mathcal{L}(\bar\lambda\varphi)=R.
\end{align}
Denote by
\begin{align*}
u_0(x,t)=\bar\lambda\xi(t)\varphi(x)+R_0\xi(t)\tilde{e}_N(x),
\end{align*}
then $u_0$ satisfies the conditions of Assumption \ref{Assumption1}. Moreover, it follows from \eqref{eq:6.47}-\eqref{eq:6.48} and $\spt\varphi\cap\spt\tilde{e}_N=\emptyset$ that
\begin{align*}
	E(0)=\mathcal{L}(R_0\tilde{e}_N)+\mathcal{L}(\bar\lambda\varphi)=R\le \eta\bar{\lambda}^2\xi(0)\xi'(0)\|\varphi\|_2^2<\eta\int_{\Omega}u_0(0)\partial_t u_0(0)\,dx.
\end{align*}
\end{enumerate}
\end{proof}
\section*{Acknowledgement}
Research of Bo You partially supported by the National Science Foundation of China Grant (11871389), the Fundamental Research Funds for the Central Universities (xzy012022008) and Shaanxi Fundamental Science Research Project for Mathematics and Physics (22JSY032). Research of Marcelo M. Cavalcanti partially supported by the CNPq Grant 300631/2003-0.

	\bibliography{BIB.bib}
	\bibliographystyle{abbrv}
\end{document}